\documentclass[a4, 11pt] {amsart} 
\usepackage{amssymb,times, amscd,amsmath,amsthm, xypic}
\usepackage{geometry}
\usepackage{amssymb}
\usepackage[normalem]{ulem}
\usepackage{tikz}
\usepackage{tikz-cd}
\usepackage{hyperref}
\usepackage{thmtools, thm-restate}
\usepackage{ulem}
\usepackage{cleveref}
\pagestyle{plain}
\numberwithin{equation}{section}
\newtheorem{pro}[equation]{Proposition}

\newtheorem{cor}[equation]{Corollary}

\newtheorem{lem}[equation]{Lemma}
\theoremstyle{definition}
\newtheorem{ex}[equation]{Example}
\newtheorem{defn}[equation]{Definition}

\newtheorem{rem}[equation]{Remark}

\newcommand{\cat}[1]{\mathsf{#1}}
\newcommand{\fun}[1]{\mathsf{#1}}
\newcommand{\id}{\mathrm{id}}
\newcommand{\poset}{\cat{Poset}}
\newcommand{\proset}{\cat{Proset}}
\newcommand{\tsp}{\cat{Top}}
\newcommand{\alex}{\cat{Alex}}

\newcommand{\asp}{\fun{T}}
\newcommand{\spp}{\fun{P}}

\def \bC{\mathbb C}

\def\bN{\mathbb N}
\def\bR{\mathbb R}

\def\bC{\mathbb  C}

\def\op{\operatorname}

\begin{document}

\title{ On stratifications and poset-stratified spaces} 

\thanks {
\noindent
\emph{keywords} : stratification, poset-stratified space, decomposition space, poset, Alexandrov topology\\
\emph{Mathematics Subject Classification 2000}: 54B15, 57N80, 32S60, 06A99 }

\author{Lukas Waas, Jon Woolf and Shoji Yokura }

\address{Ruprecht--Karls--Universit\"at Heidelberg, Mathematisches Institut, 
Im Neuenheimer Feld 205, 69120 Heidelberg, Deutschland}
\email{lwaas@mathi.uni-heidelberg.de}

\address{Department of Mathematical Sciences, University of Liverpool, L69 7ZL United Kingdom} 
\email{Jonathan.Woolf@liverpool.ac.uk}

\address{Graduate School of Science and Engineering, Kagoshima University, 1-21-35 Korimoto, Kagoshima, 890-0065, Japan}
\email{yokura@sci.kagoshima-u.ac.jp}

\begin{abstract}
A stratified space is a topological space equipped with a \emph{stratification}, which is a decomposition or partition of the topological space satisfying certain extra conditions. More recently, the notion of poset-stratified space, i.e., 
topological space endowed with a continuous map to a poset with its Alexandrov topology, has been popularized.
Both notions of stratified spaces are ubiquitous in mathematics, ranging from investigations of singular structures in algebraic geometry to extensions of the homotopy hypothesis in higher category theory. In this article we study the precise mathematical relation between these different approaches to stratified spaces.
\end{abstract}

\maketitle

\section{Introduction}

Stratified spaces are ubiquitous in topology and geometry. For example, they arise naturally in the presence of group actions, singularities, and when the space considered is a configuration or moduli space. Classically, a stratification is a decomposition of the space satisfying certain conditions. More recently, stratified spaces have been defined as spaces over posets, again usually satisfying certain conditions. The purpose of this paper is to explain the relationship between these two perspectives.

We begin by recalling the classical definition, as given for example in \cite[8.2.4 Stratifications and Thom's $a_f$-Regularity]{Oka}. This defines the basic structure which underlies any flavour of stratified space. Depending on context, extra topological or geometric conditions may be imposed on the strata, and on how they fit together. Common examples of these extra structures include
\begin{itemize}
\item topological stratifications: each stratum is a \emph{topological manifold},
\item smooth stratifications: each stratum  is a \emph{smooth manifold},
\item Whitney stratifications \cite{Wh}: each pair of smooth strata satisfies \emph{Whitney's conditions A and B} concerning \emph{limits of tangent spaces and secant lines},
\item Thom--Mather stratifications \cite{Thom, Mather, Mather2}: each stratum  is smooth and is equipped with `\emph{control data}' describing a neighborhood of it.
\end{itemize}
\begin{defn}
A \emph{decomposition} of a topological space $X$ is a set $\{X_i\mid i\in I\}$ of non-empty, disjoint subspaces $X_i$, referred to as the \emph{strata} of the decomposition, such that $\bigcup_{i\in I}X_i = X$.
\end{defn}
\begin{defn}\label{stra-1}
A {\em stratification} of a topological space $X$ is a decomposition $\{X_i\mid i\in I\}$  satisfying:
\begin{description}
\item[Local finiteness] each $x \in X$ has an open neighborhood $U_x$ intersecting only finitely many strata, i.e., $\{i \in I \, \, | \, \, U_x \cap X_i \not = \emptyset \}$ is finite,
\item[Local closure] each $X_i$ is \emph{locally closed}, i.e. an intersection of an open and a closed set,
\item[Frontier condition]  if $X_i \cap \overline{X_j} \not = \emptyset$ then $X_i \subset \overline{X_j}$.
\end{description}
A topological space endowed with a stratification is called a \emph{stratified space}.
\end{defn}
The name of the last condition above refers to the fact that $X_i$ is contained in the \emph{frontier}  $\overline{X_j} \setminus X_j$ of $X_j$ when $i\neq j$. It is equivalent to the condition that \emph{the closure of each stratum is a union of strata}, and this alternative formulation is sometimes used as the definition, see, for example, \cite[Appendix C. Stratified Spaces and Singularities, Remark C.2., p. 434]{PS}. When the frontier condition holds, the set $I$ of strata is partially-ordered by the relation
\[
i\leq j \iff X_i \subseteq \overline{X_j}.
\]
This feature is sometimes incorporated into the definition of stratification, for example \cite{encyclopedia} defines an \emph{$I$-decomposition} of a space $X$ to be a stratification, as in Definition \ref{stra-1}, for which the set of strata $I$ is partially-ordered by $i \leq j \iff X_i \subset \overline{X_j}$. Thus a stratification with set of strata $I$ is an $I$-decomposition, and {\it vice versa}. Another minor variant of the definition is found in \cite{Stack}:
\begin{itemize}
\item a \emph{partition} is a decomposition with \emph{locally closed} strata;
\item a \emph{stratification} is a partition $\{X_i \mid i\in I\}$ of $X$ together with a \emph{partial order} on $I$ such that $\overline{X_j} \subset  \bigcup_{i \leq j} X_i$ for each $j \in I$;
\item a  \emph{good stratification} is a partition \emph{satisfying the frontier condition};
\end{itemize}
A good stratification is a stratification in the above sense --- the frontier condition implies that the set $I$ of strata is partially-ordered with $\overline{X_j}=\bigcup_{i\leq j} X_i$ --- and is a stratification in the sense of Definition \ref{stra-1} if, in addition, the associated decomposition  is locally finite.

\begin{rem}
 In \cite{Mather2}, see also \cite{wiki-stra}, John Mather uses the name ``\emph{prestratification}" for the above decomposition as in Definition \ref{stra-1} and the name ``stratification" is used for an existence of prestratifications locally at each point. 
\end{rem}

More recently, the partial ordering of the set of strata has been elevated to a central role leading to the notion of poset-stratified space. This perspective appears in \cite{Woolf} and was made popular by Jacob Lurie \cite{Lurie}, but the idea is already implicit in earlier work such as \cite{Hughes}.
\begin{defn}
\label{stra-2}
A \emph{poset-stratified space}  is a continuous map $\pi \colon X \to I_{\preccurlyeq}$ from a topological space $X$ 
to the Alexandrov space $I_{\preccurlyeq}$ of a poset $(I,\preccurlyeq)$.
\end{defn}

This definition has several advantages, not least that it has good categorical properties, and has been used extensively in studying the homotopy theory of stratified spaces, see for example \cite{BGH, Dou, DW, Ha, Hughes, Lurie, Nan, Wa1, Wa2, Wa3, Woolf, Woolf2}. 
The question arises as to what the precise relationship between stratified (in the sense of Definition \ref{stra-1}) and poset-stratified spaces is. In order to answer this we make the following definitions. 
\begin{defn}
Let $\{X_i \mid i\in I\}$ be a decomposition of $X$. The \emph{decomposition map} $\pi \colon X \to I$ is defined by $\pi(x)=i$ for all $x\in X_i$. The \emph{decomposition space}  $I_\pi$ is the set $I$ of strata equipped with the quotient topology, i.e., the finest topology for which $\pi$ is continuous.
\end{defn}
A poset-stratified space $\pi \colon X \to I_{\preccurlyeq}$ gives rise to the natural decomposition $\{X_i \, | \, i \in \op{Im} \pi \}$ of $X$ with $X_i :=\pi^{-1}(i)$. Suppose for a second that $\pi$ is surjective\footnote{This is generally not a major assumption, as one can always replace the poset $(I,\preccurlyeq)$ with $\op{Im} \pi$, equipped with the inherited partial order. Then the induced map $X \to \op{Im} \pi$ is a surjective poset-stratified space with the same associated decomposition.}, and we can thus identify the underlying set $I$ of $I_{\preccurlyeq}$ with the indexing set $\op{Im} \pi$. On the level of sets, the associated decomposition map $\pi: X \to I_{\pi}$ agrees with $\pi$. However, on the level of the spaces, the topology on $I_{\pi}$ is, a priori, finer  than the Alexandrov topology on $I_{\preccurlyeq}$ associated to $\preccurlyeq$. The motivation of this present work (and \cite{Yo}) is to study the relation between ``stratification" and ``poset-stratified space" by \emph{analyzing properties of the continuous decomposition map $\pi: X \to I_{\pi}$}.
In answer to the question of the relationship between stratified and poset-stratified spaces our analysis of decompositions has the following three consequences. First, as is `well-known to experts' that, any stratified space is poset-stratified:
\begin{restatable}{thm}{thmA}
\label{thm A}
Suppose that $\{X_i \mid i\in I\}$ is a stratification of $X$. Then 
\begin{enumerate}
\item the set $I$ of strata is partially-ordered by $i\leq j \iff X_i \subseteq \overline{X_j}$ and 
\item the decomposition map $\pi \colon X \to I_{\pi} = I_\leq$ is a poset-stratified space with respect to this partial order.
\end{enumerate}
\end{restatable}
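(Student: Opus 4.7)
The plan is to verify parts (1) and (2) separately, drawing on the three axioms of a stratification in a focused way.

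For part (1), reflexivity $i\leq i$ is the trivial inclusion $X_i\subseteq\overline{X_i}$, and transitivity is immediate since $X_i\subseteq\overline{X_j}$ and $X_j\subseteq\overline{X_k}$ give $X_i\subseteq\overline{X_j}\subseteq\overline{X_k}$. The subtle point is antisymmetry, and this is where I would invoke local closure. Suppose $X_i\subseteq\overline{X_j}$ and $X_j\subseteq\overline{X_i}$, so $\overline{X_i}=\overline{X_j}=:C$. Local closure, rephrased, says that each $X_k$ is open in its closure, so $X_i=C\cap U_i$ and $X_j=C\cap U_j$ for open sets $U_i,U_j\subseteq X$. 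Since $\overline{X_i}=C$ means $X_i$ is dense in $C$, the nonempty open-in-$C$ set $X_j=C\cap U_j$ must meet $X_i$; by disjointness of the strata this forces $i=j$.

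For part (2), surjectivity of $\pi$ is automatic as each stratum is nonempty. Continuity amounts to showing that for every up-set $U\subseteq I$ (the open sets of the Alexandrov topology on $I_{\leq}$), the saturated set $\pi^{-1}(U)=\bigcup_{i\in U}X_i$ is open in $X$. I would argue pointwise. Given $x\in\pi^{-1}(U)$ with $\pi(x)=i_0\in U$, local finiteness yields an open neighbourhood $V$ of $x$ meeting only finitely many strata, say $X_{i_1},\ldots,X_{i_n}$, which we may arrange to include $X_{i_0}$. I then set
\[
V'=V\setminus\bigcup_{k:\, i_k\notin U}\overline{X_{i_k}},
\]
which is open in $X$ since the excluded set is a finite union of closed sets.

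It remains to check that $x\in V'$ and that $V'\subseteq\pi^{-1}(U)$. For the first, if $x\in\overline{X_{i_k}}$ for some $i_k\notin U$, then $X_{i_0}\cap\overline{X_{i_k}}\neq\emptyset$, so the frontier condition gives $X_{i_0}\subseteq\overline{X_{i_k}}$, i.e.\ $i_0\leq i_k$; since $U$ is upward closed and $i_0\in U$, this would force $i_k\in U$, a contradiction. For the second, any $y\in V'$ lies in some $X_{i_k}$, and if $i_k\notin U$ then $y\in X_{i_k}\subseteq\overline{X_{i_k}}$, contradicting $y\in V'$; hence $i_k\in U$ and $y\in\pi^{-1}(U)$. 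The main conceptual obstacle is antisymmetry via local closure; the continuity argument is then a matter of combining local finiteness with the frontier condition through the above local truncation.
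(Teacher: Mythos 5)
Your argument is correct, but it takes a genuinely different route from the paper's. The paper deduces Theorem~\ref{thm A} from its general machinery: local finiteness makes the decomposition Alexandrov (Corollary~\ref{cor:lf is alex}), the frontier condition identifies the decomposition preorder (the specialization preorder of the quotient topology on $I$) with $i\leq j \iff X_i\subseteq\overline{X_j}$ (Proposition~\ref{FC}), and local closure together with the frontier condition then yield the partial order and the poset-stratified structure (Corollary~\ref{lc+front}). You instead argue directly and elementarily: antisymmetry from local closure (each stratum is open in its closure, so two strata with the same closure must intersect, forcing them to coincide), and continuity of $\pi\colon X\to I_\leq$ by the local truncation $V'=V\setminus\bigcup_{i_k\notin U}\overline{X_{i_k}}$, which uses exactly local finiteness and the frontier condition. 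Both steps check out, and your division of labour is illuminating: local closure enters only for antisymmetry, local finiteness only for continuity. What the paper's route buys in exchange is the identification $I_\pi=I_\leq$ that is written into the statement, i.e.\ that the quotient topology on $I$ coincides with the Alexandrov topology of $\leq$; your continuity argument gives only one inclusion of topologies (every up-set is quotient-open). The missing inclusion is a one-liner you should add: if $\pi^{-1}(U)$ is open in $X$, $i\in U$ and $i\leq j$, then the open set $\pi^{-1}(U)$ meets $X_i\subseteq\overline{X_j}$ and hence meets $X_j$, so $j\in U$; thus every quotient-open set is an up-set and $I_\pi=I_\leq$. With that sentence included, your direct proof establishes the full statement.
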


Second, not every poset-stratified space is stratified: the strata of a poset-stratified space are locally closed, but the associated decomposition need not be locally finite, nor satisfy the frontier condition. 
The next result gives sufficient conditions, in terms of the Alexandrov space of the poset and the map to it, for a poset-stratified space to be stratified. We derive this as a consequence of our analysis of decompositions; it can also be proved directly,  see Remark~\ref{rem:thm B 2} and Dai Tamaki and Hiro Lee Tanaka \cite[Propositions 3.2 and 3.4]{TT}.
\begin{restatable}{thm}{thmB}
\label{thm B}
Suppose $\pi \colon X \to I_\preccurlyeq$ is a poset-stratified space such that the Alexandrov space $I_\preccurlyeq$ is locally finite and $\pi$ is an open map. Then the decomposition $\{\pi^{-1}(i) \mid i \in \op{Im}\pi
\}$ is a stratification of $X$. 
\end{restatable}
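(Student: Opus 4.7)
The plan is to verify the three defining properties of a stratification---local closure, local finiteness, and the frontier condition---in turn, using the notation $X_i = \pi^{-1}(i)$ throughout. Since the open subsets of an Alexandrov space $I_\preccurlyeq$ are precisely the upper sets, each element $i \in I$ has a smallest open neighborhood $U_i = \{j \in I : j \succcurlyeq i\}$, and local finiteness of $I_\preccurlyeq$ translates into each $U_i$ being finite.

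I would first handle local closure, since it uses only continuity of $\pi$: any singleton $\{i\}$ in an Alexandrov space is locally closed---explicitly, $\{i\} = U_i \cap \overline{\{i\}}$---so $X_i = \pi^{-1}(U_i) \cap \pi^{-1}(\overline{\{i\}})$ is the intersection of an open and a closed set in $X$. Local finiteness of the decomposition is also immediate: given $x \in X$, set $i = \pi(x)$; then $\pi^{-1}(U_i)$ is an open neighborhood of $x$ whose $\pi$-image lies in $U_i$, so it meets only the (finitely many) strata $X_j$ with $j \in U_i$.

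The frontier condition is where the openness hypothesis on $\pi$ enters, and I expect it to be the main point. I plan to split it into two observations. First, if $X_i \cap \overline{X_j} \neq \emptyset$, then picking $x$ in the intersection and using $\pi^{-1}(U_i)$ as an open neighborhood of $x$ forces $X_j \cap \pi^{-1}(U_i) \neq \emptyset$, which says $j \in U_i$, i.e., $i \preccurlyeq j$. Second, given $i \preccurlyeq j$ and any $y \in X_i$, openness of $\pi$ sends an arbitrary open neighborhood $V$ of $y$ to an open subset of $I_\preccurlyeq$ containing $i$, which therefore contains $U_i$ and in particular contains $j$; hence $V$ meets $X_j$, proving $y \in \overline{X_j}$. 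Together these give $X_i \subseteq \overline{X_j}$ whenever $X_i \cap \overline{X_j} \neq \emptyset$.

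The main conceptual point is thus the role of openness. Continuity of $\pi$ alone gives the first observation, but promoting the single point $x \in X_i \cap \overline{X_j}$ to an arbitrary $y \in X_i$ requires passing from a neighborhood of $y$ in $X$ to a neighborhood of $i$ in $I_\preccurlyeq$, and this is exactly what the openness hypothesis provides.
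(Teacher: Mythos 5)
Your proof is correct, and it takes a genuinely different route from the paper. You verify the three defining conditions of a stratification directly against the given poset $I_\preccurlyeq$: local closure from the fact that $\{i\}=U_i\cap\overline{\{i\}}$ is locally closed in the Alexandrov space of a poset (this is the paper's Lemma \ref{poset-lc}) together with continuity of $\pi$; local finiteness from the finiteness of the minimal open neighborhoods $U_i$; and the frontier condition by the two-step argument in which continuity gives $i\preccurlyeq j$ from $X_i\cap\overline{X_j}\neq\emptyset$, and openness of $\pi$ gives $X_i\subseteq\overline{X_j}$ from $i\preccurlyeq j$, since $\pi(V)\supseteq U_i\ni j$ for every open neighborhood $V$ of a point of $X_i$. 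All steps check out; in particular your diagnosis that openness is exactly what upgrades the single point of $X_i\cap\overline{X_j}$ to all of $X_i$ matches the role openness plays in the paper's Proposition \ref{FC}. The paper instead argues through its general framework: it first transfers local finiteness and openness from $I_\preccurlyeq$ to the decomposition space $I_\pi$ (using that the quotient topology is finer), concludes the decomposition is locally finite and hence Alexandrov via Corollary \ref{cor:lf is alex}, and then invokes Corollary \ref{lc+front} (which rests on Propositions \ref{poset-strat} and \ref{FC}) to obtain local closure and the frontier condition. What your approach buys is a short, self-contained and elementary argument that never needs the quotient topology on $I$ or the Alexandrov-decomposition machinery; what the paper's approach buys is that the intermediate statements (openness of $\pi\colon X\to I_\pi$, the identification of the decomposition preorder) feed directly into the surrounding discussion, e.g.\ the initiality of the decomposition preorder in Remark \ref{rem:thm B} and Corollary \ref{cor:thm B+}.
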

When the conditions of Theorem \ref{thm B}  are satisfied, the decomposition $\{\pi^{-1}(i) \mid i\in \op{Im}\pi
\}$ is a stratification. Assuming $\op{Im} \pi = I$, it follows that $\pi \colon X \to I_\leq$ is a poset-stratified space with respect to the partial order 
\begin{equation}
\label{decomp order}
i\leq j \iff \pi^{-1}(i) \subseteq \overline{\pi^{-1}(j)}
\end{equation}
 by Theorem \ref{thm A}. In fact, in this case, the partial order $\leq$ is initial amongst those partial orders on the set $I$ of strata for which the decomposition map is a 
poset-stratified space, i.e.\ the set-theoretic identity $(I,\leq) \to (I,\preccurlyeq)$ is monotone\footnote{Equivalently, the identity map $I_{\leq} \to I_{\preccurlyeq}$ is continuous, since for any open set $U$ of $I_{\preccurlyeq}$ the inverse image $\pi^{-1}(U)$ is an open set by the continuity of $\pi \colon X \to I_\preccurlyeq$, thus $U$ is an open set in $I_{\pi}=I_{\leq}$ because $I_{\pi}$ has the quotient topology.}, but need not be an order isomorphism. 

The third consequence is that the sufficient conditions in Theorem \ref{thm B}  are necessary when $X$ is poset-stratified for the {\em decomposition preorder} in which 
\[
i \leq j \iff \pi^{-1}(i) \ \text{is in the minimal closed union of strata containing}\ \pi^{-1}(j).
\]
 Moreover, when this holds the decomposition preorder has the simpler description (\ref{decomp order}).
\begin{restatable}{thm}{thmC}
\label{thm C} Let $\{ X_i \mid i\in I\}$ be a decomposition of $X$.
Suppose $\pi \colon X \to I_\leq$ is poset-stratified with respect to the decomposition preorder  on $I$. Then $\{ X_i \mid i\in I\}$ is a stratification if, and only if,  the Alexandrov space $I_\leq$ is locally finite and the map $\pi$ is open.
\end{restatable}

In summary the contents of the paper are as follows. In \S \ref{sec:preorders and alexandrov spaces} we recall the notion of an Alexandrov space and review the well-known adjunction between preordered sets and topological spaces, which restricts to an equivalence between preorders and Alexandrov spaces. 

In \S\ref{sec:decompositions} we introduce the notion of an {\em Alexandrov decomposition}, that is one whose decomposition space is an Alexandrov space. This is a slightly weaker notion 
than that of a poset-stratified space, but which still turns out to have good properties, and forms a useful bridge between the worlds of stratified and poset-stratified spaces. In particular we show, Corollary \ref{cor:lf is alex}, that every locally finite decomposition is an Alexandrov decomposition. In fact we derive this as a consequence of a more general result, Proposition \ref{general-l.f.}, about decompositions of spaces with a final topology for a suitable family of maps. We end the section by showing that an Alexandrov decomposition is a poset-stratified space when each stratum is open in the minimal closed union of strata containing it (Proposition \ref{poset-strat}). This is a slightly stronger condition than asking that each stratum is locally closed. 

In \S\ref{sec:frontier condition} we discuss the frontier condition. This is the key property which ensures a  close relationship between the properties of a decomposition and of its decomposition map and space. For example, although the strata of a poset-stratified space are always locally closed, the converse is {\em not} true in the absence of the frontier condition. For example, the decomposition
\[
S^1 = \left\{ e^{\sqrt{-1} \theta} \mid 0< \theta \leq \pi\right\} \sqcup  \left\{ e^{\sqrt{-1} \theta} \mid \pi< \theta \leq 2\pi\right\}
\]
of the unit circle in $\bC$ has locally closed strata, but the decomposition space is the indiscrete space with two points. This is not the Alexandrov space of (any) partial-order on the set of strata, so the decomposition map does not exhibit $S^1$ as a poset-stratified space. The main result of this section is Proposition \ref{FC} which states that an Alexandrov decomposition satisfies the frontier condition if, and only if, its decomposition map is open. Both conditions fail in the above example.

Finally in \S\ref{sec:main} we assemble the results of \S\ref{sec:decompositions} and \ref{sec:frontier condition} to prove Theorems \ref{thm A},  \ref{thm B} and \ref{thm C}, and give a number of simple, but typical, examples to illustrate them.

In Appendix \ref{sec:semicontinuity} we briefly discuss Moore's notions of upper and lower semicontinuous decompositions. Lower semicontinuous decompositions are those for which the decomposition map is open, and so are closely related to stratifications and the contents of this paper. Upper semicontinuous decompositions appear in several famous results, including Moore's and Bing's theorems on decompositions of Euclidean spaces into continua, and Freedman's proof of the Poincar\'e Conjecture in 
dimension $4$. Loosely,  upper semicontinuous decompositions seem to appear in `wild topology' and lower semicontinuous ones in `stratification theory', which is often an attempt to tame the topology of singular spaces.

\subsection*{Acknowledgements}
We would like to express our thanks to the anonymous referee for his/her careful reading the paper and very useful and constructive suggestions and comments.
L.W. is supported by the Landesgraduiertenf\"orderung Baden-W\"urttemberg. 
S.Y. is supported by JSPS KAKENHI Grant Numbers JP19K03468 and JP23K03117. \\

\section{Preorders and Alexandrov Spaces}
\label{sec:preorders and alexandrov spaces}

A preorder on a set $P$ is a relation $\leq$ which is reflexive and transitive. A set $(P, \leq)$ equipped with a preorder $\leq$ is called a \emph{proset}. If a preorder is in addition anti-symmetric, then it is a partial order; a set with a partial order is called a \emph{poset}. A map $f\colon P \to Q$ between prosets is {\em monotone} if $p\leq p' \implies f(p) \leq f(p')$.

A preorder $\leq$ on $P$ defines an equivalence relation $p \sim p' \iff p\leq p'$ and $p'\leq p$. The set $P/\!\!\sim$ of equivalence classes inherits a natural partial order defined by $[p]\preccurlyeq [p'] \iff p\leq p'$. Let $\proset$ be the category of prosets and monotone maps, and  $\poset$ the full subcategory of posets. The assignment $P \mapsto P/\!\!\!\sim$ extends to a functor $\proset \to \poset$ which is left adjoint to the inclusion $\poset \hookrightarrow \proset$.

A preorder $\leq$ determines a natural topology on $P$ in which $U \subset P$ is open if, and only if, it is upward-closed, i.e., 
\[
p \in U\ \text{and}\  p \leq q \implies q \in U.
\]
Each $p\in P$ has a minimal open 
neighborhood   $U_p = \{q \in P \mid p \leq q \}$ in this topology, and the set of minimal open neighborhoods $\{U_p \, | \, p \in P\}$ is a base. This topology is referred to as the \emph{Alexandrov topology} on $P$ because it makes $P$ into an Alexandrov space in the sense of the following definition. We denote the space $P$ equipped with this topology by $P_\leq$. 

\begin{defn}[Alexandrov space]  
Let $X$ be a topological space. If the intersection of any family of open sets is open, equivalently the union of any family of closed sets is closed, then the topology is called an \emph{Alexandrov topology} and the space is called an \emph{Alexandrov space}. 
\end{defn}

The property of being an Alexandrov space is local.
\begin{pro} 
\label{pro:la is alex}
Any locally Alexandrov space, i.e., any space in which each point has an open
neighborhood which is an Alexandrov space in the subspace topology, is an Alexandrov space.
\end{pro}
\begin{proof} Let $U_i\subseteq X$ be open for $i\in I$. To show that $X$ is Alexandrov, we must show that $\bigcap_{i\in I}U_i$ is an open subset of $X$. Since $X$ is locally Alexandrov, there is an open covering $X =\bigcup_{j \in J}V_j$ such that each $V_j$ is an Alexandrov space in the subspace topology. Then
\[
\bigcap_{i\in I}U_i = X \cap \left(\bigcap_{i\in I}U_i\right) = \left(\bigcup_{j \in J}V_j\right) \cap  \left(\bigcap_{i\in I}U_i\right) = \bigcup_{j \in J} \left(\bigcap_{i \in I}(V_j \cap U_i)\right).
\]
Since each $V_j$ is an Alexandrov space, $\bigcap_{i \in I}(V_j \cap U_i)$ is an open subset of $V_j$, hence also an open subset of $X$. Thus the right hand side above is an open subset of $X$, and therefore so is $\bigcap_{i\in I}U_i$. 
\end{proof}
\begin{rem}
Any finite topological space, i.e., any topological space with finitely many points, is  Alexandrov. Therefore any locally finite space is locally Alexandrov, and hence Alexandrov by the above. This latter result is due to Ioan Mackenzie James, see \cite{James} and \cite[Corollary 3.5]{NO}. The converse is false: there are Alexandrov spaces which are not locally finite: for example, the Alexandrov space of the natural numbers $\bN$ with the usual order is \emph{not} locally finite, because the minimal open neighborhood $U_m$ of $m\in \bN$ is the infinite set $\{n \mid m\leq n \}$.
\end{rem}



The following lemma is well-known, but as it plays a crucial role in this paper we give a proof.
\begin{lem} A topological space $X$ is an Alexandrov space if, and only if, each point in $X$ has a minimal open neighborhood. 
\end{lem}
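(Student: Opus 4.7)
My plan is to prove the two directions of this equivalence directly from the definitions, with the key idea being that the ``minimal open neighborhood'' at $x$ is nothing other than the intersection of all open neighborhoods of $x$.

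For the forward direction, assume $X$ is an Alexandrov space. Given $x \in X$, I would define
\[
U_x := \bigcap \{U \subseteq X \mid U \text{ open and } x \in U\}.
\]
This set contains $x$, and by the Alexandrov property (arbitrary intersections of open sets are open) it is open. By construction, any open neighborhood of $x$ contains $U_x$, so $U_x$ is the desired minimal open neighborhood.

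For the converse, assume each $x \in X$ has a minimal open neighborhood, which I will call $U_x$. Let $\{V_\alpha\}_{\alpha \in A}$ be an arbitrary family of open subsets of $X$, and set $V := \bigcap_{\alpha \in A} V_\alpha$. I must show $V$ is open. For any $x \in V$ and any $\alpha \in A$, the set $V_\alpha$ is an open neighborhood of $x$, so by minimality $U_x \subseteq V_\alpha$; intersecting over $\alpha$ gives $U_x \subseteq V$. Therefore
\[
V = \bigcup_{x \in V} U_x,
\]
which is open as a union of open sets.

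Neither step presents a serious obstacle; the whole argument hinges on recognizing that the candidate minimal neighborhood must be the intersection of \emph{all} open neighborhoods, and that having such minimal neighborhoods pointwise is precisely what lets one reassemble an arbitrary intersection of opens as a union of opens. The only thing to be slightly careful about is that in the forward direction one uses the ``arbitrary intersection'' formulation of Alexandrov, while in the backward direction one recovers exactly that same property, so the equivalence is essentially tautological once the right set $U_x$ is identified.
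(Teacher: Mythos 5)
Your proof is correct and follows essentially the same argument as the paper: take $U_x$ to be the intersection of all open neighborhoods of $x$ in the forward direction, and rewrite an arbitrary intersection of open sets as $\bigcup_{x\in V}U_x$ in the converse. (Your formulation even handles the empty intersection uniformly, where the paper treats it as a separate trivial case.)
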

\begin{proof}
If $X$ is an Alexandrov space, then  the intersection $U_x$ of all the open neighborhoods of a point $x\in X$ is a minimal open neighborhood of $x$. Conversely, suppose that each point $x\in X$ has a minimal open neighborhood $U_x$. Let $U=\bigcap_{j \in J}U_j$ be an intersection of open sets. If $U=\emptyset$, then we are done. If $U \not = \emptyset$, then $U_x \subset U_j$ for all $j \in J$ and $x\in U$, so $x \in U_x \subset U$. Hence 
\[
U =\bigcap_{j\in J}U_j =  \bigcup_{x \in U} U_x
\]
is a union of open sets, and so is open. Thus  $X$ is an Alexandrov space.
\end{proof}

\begin{rem}
\begin{enumerate}
\item For a proset $(P, \leq)$ the downward-closed set  $D_p :=\{ q \, \, | \, \, q \leq p\}$ is the minimal \emph{closed} neighborhood of $p$. This is because its complement 
\[
P-D_p = \bigcup_{r\not \leq p} U_r
\]
 is the maximal open set not containing $p$.
\item The closed sets of an Alexandrov space also form an Alexandrov topology, because they are closed under arbitrary intersections and unions, equivalently because the reverse or opposite of a preorder is again a preorder. 
In this topology $U_p$ is the minimal closed, and $D_p$ the minimal open, 
neighborhood of $p$. This complementary topology is sometimes referred to as `the' topology on the proset, see for example \cite{Ar}, \cite{B}, \cite{May1} and \cite{Sp}. However, when stratification theory or poset-stratified spaces are considered, as in \cite{Curry} and \cite{Woolf}, it is more convenient to take $U_p$ to be open, see also \cite[Definition A.5.1]{Lurie} and \cite[Definition 2.1]{Tam}. 
\end{enumerate}
\end{rem}

\begin{pro}
\label{pro:proset alex adj}
There is an adjunction $\asp \colon \proset \longleftrightarrow \tsp \colon \spp$ between prosets and topological spaces. The left adjoint $\asp$ takes a proset to its Alexandrov space, and the right adjoint $\spp$ takes a topological space to its set of points equipped with the {\em specialization preorder} 
\[
x \leq y \iff x\in\overline{\{y\}}.
\]
The adjunction restricts to an equivalence $\asp \colon \proset \longleftrightarrow \alex \colon \spp$ between the category of prosets and the full subcategory of  Alexandrov spaces. 
\end{pro}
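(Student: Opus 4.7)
The plan is to establish the adjunction via a natural bijection of hom-sets, and then to upgrade it to an equivalence on the full subcategory of Alexandrov spaces by showing the unit is an isomorphism at every proset and the counit is an isomorphism at every Alexandrov space.

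First I would dispatch functoriality. If $f \colon P \to Q$ is monotone and $U \subseteq Q$ is upward-closed, then $f^{-1}(U)$ is upward-closed by monotonicity, so $\asp(f)$ is continuous. If $g \colon X \to Y$ is continuous, then $g(\overline{\{y\}}) \subseteq \overline{\{g(y)\}}$, which translates directly to monotonicity of $\spp(g)$ with respect to the specialization preorder.

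The heart of the adjunction is the identity $\hom_{\tsp}(\asp(P), X) = \hom_{\proset}(P, \spp(X))$ at the level of set maps. Given $f \colon P \to X$, I would prove that $f$ is continuous out of $\asp(P)$ if and only if $f$ is monotone into $\spp(X)$. For the forward implication, given $p \leq q$, any open neighborhood $U$ of $f(p)$ in $X$ has preimage $f^{-1}(U)$ open, hence upward-closed in $\asp(P)$, so $f(q) \in U$; thus $f(p) \in \overline{\{f(q)\}}$. For the reverse, if $f$ is monotone and $U$ is open in $X$, then any $p \in f^{-1}(U)$ with $p \leq q$ gives $f(p) \in \overline{\{f(q)\}}$, so the open neighborhood $U$ of $f(p)$ must contain $f(q)$; hence $f^{-1}(U)$ is upward-closed and open in $\asp(P)$. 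Naturality in both variables is automatic, which establishes the adjunction.

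For the restriction to an equivalence, the decisive observation is that in any Alexandrov space $X$ the minimal open neighborhood coincides with the specialization upset: $y \in U_x$ iff every open neighborhood of $x$ contains $y$ iff $x \in \overline{\{y\}}$, i.e.\ $x \leq y$. Using this I would verify that the unit $P \to \spp(\asp(P))$ is the identity on underlying sets with matching preorders, because the smallest closed (equivalently, downward-closed) set containing $q$ in $\asp(P)$ is precisely the downset $D_q$, so $p \in \overline{\{q\}}$ iff $p \leq q$. For the counit $\asp(\spp(X)) \to X$ on an Alexandrov space, it is again the identity on underlying sets; every open set of $X$ is upward-closed in the specialization preorder, and conversely any upward-closed $V \subseteq \spp(X)$ equals $\bigcup_{x \in V} U_x$ and is therefore open in $X$. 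I expect this last step --- recovering every open set as a union of the minimal neighborhoods $U_x$ --- to be the main obstacle, since it is precisely where the Alexandrov hypothesis enters; without it one obtains only a continuous bijection at the counit rather than a homeomorphism.
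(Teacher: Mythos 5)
Your proposal is correct and follows essentially the same route as the paper's sketch: both arguments rest on the same two key computations, namely that the specialization preorder of $\asp(P)$ is the original preorder on $P$, and that the Alexandrov topology of the specialization preorder refines the given topology, with equality exactly when the space is Alexandrov. The only difference is presentational: you verify the adjunction through the hom-set bijection $\hom_{\tsp}(\asp(P),X)\cong\hom_{\proset}(P,\spp(X))$ (the identity on underlying functions), whereas the paper exhibits the identity maps as unit and counit; your version is, if anything, slightly more complete on that point.
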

\begin{proof}[Sketch proof]
This is also well-known, so we provide only a sketch. The assignment $P \mapsto \asp(P)$ extends to a functor because any monotone map $f\colon P\to Q$ is continuous with respect to the Alexandrov topologies on $P$ and $Q$. Similarly, the assignment $X \mapsto \spp(X)$ extends to a functor because a continuous map $f \colon X\to Y$ is monotone as a map between the specialization preorders 
on $X$ and $Y$. 

The specialization preorder on the Alexandrov space $\asp(P)$ of a proset $P$ is the original preorder because $p\leq q \iff p \in D_q$ for both preorders. Thus the identity defines a natural isomorphism $P \cong (\spp\circ \asp)(P)$. In the other direction, the Alexandrov topology of the specialization preorder of a space $X$ is always finer than the original topology. To see this note that if $D\subseteq X$ is closed, $y\in D$ and $x\leq y$ then $x\in \overline{\{y\}} \subseteq D$, so that $D \subseteq (\asp\circ\spp)(X)$ is closed too. Thus the identity defines a natural map
\[
(\asp\circ\spp)(X) \to X.
\]
These natural maps are respectively the unit and counit of the adjunction. 

When $X$ is an Alexandrov space, then we can write any closed $D\subseteq X$ as the union $D=\bigcup_{x\in D} \overline{\{x\}}$ of closed subsets in $(\asp\circ\spp)(X)$. Therefore $D$ is closed in $(\asp\circ\spp)(X)$ and the identity $(\asp\circ\spp)(X) \cong X$ is a homeomorphism. Thus the adjunction restricts to an equivalence between prosets and Alexandrov spaces. 
\end{proof}
Finally, we give a characterization of  the Alexandrov spaces of posets which will be useful later.
\begin{lem}
\label{poset-lc}
\label{locally-closed} A proset $P$ is a poset if, and only if, each singleton $\{p\}$ is locally closed in the Alexandrov space $\asp(P)$.
\end{lem}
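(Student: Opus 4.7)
My plan is to identify the singleton $\{p\}$ with the intersection $U_p \cap D_p$ of the minimal open neighborhood $U_p = \{q \mid p \leq q\}$ and the minimal closed neighborhood $D_p = \{q \mid q \leq p\}$ from the earlier remark. The key observation is that $U_p \cap D_p = \{q \mid p \leq q \text{ and } q \leq p\}$, which is precisely the equivalence class $[p]$ under the symmetrization relation $p \sim q$ used to define the poset reflection $P/\!\!\sim$. Thus $\{p\} = U_p \cap D_p$ exactly when $p$ is antisymmetric with every point related to it, so the global statement ``$P$ is a poset'' translates directly into ``$\{p\} = U_p \cap D_p$ for all $p$''.

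For the forward direction, assume $P$ is a poset. Then by antisymmetry $U_p \cap D_p = \{p\}$, exhibiting $\{p\}$ as the intersection of an open set $U_p$ with a closed set $D_p$, hence locally closed in $\asp(P)$.

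For the converse, suppose each singleton $\{p\}$ is locally closed and write $\{p\} = U \cap C$ with $U$ open and $C$ closed in $\asp(P)$. Since $p \in U$, minimality of $U_p$ as an open neighborhood gives $U_p \subseteq U$. Since $p \in C$ and $C$ is closed, the closure $\overline{\{p\}} = D_p$ is contained in $C$. Now if $q \sim p$, i.e.\ $p \leq q$ and $q \leq p$, then $q \in U_p \subseteq U$ and $q \in D_p \subseteq C$, so $q \in U \cap C = \{p\}$, i.e.\ $q = p$. This gives antisymmetry, so $P$ is a poset.

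There is no real obstacle here; the proof is essentially bookkeeping once one recalls from the preceding remark that $\overline{\{p\}} = D_p$ and that $U_p$ is the minimal open neighborhood of $p$. The only subtlety worth flagging explicitly is the use of the closure identity $D_p = \overline{\{p\}}$, which follows from $D_p$ being the minimal closed set containing $p$ in the Alexandrov topology.
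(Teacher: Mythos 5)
Your proof is correct and follows essentially the same route as the paper: both directions rest on the identification $\{p\}=U_p\cap D_p$ being equivalent to antisymmetry at $p$, with the converse using minimality of $U_p$ and $D_p$ (equivalently $D_p=\overline{\{p\}}$) to squeeze them inside an arbitrary open-closed pair exhibiting local closedness. Nothing is missing.
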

\begin{proof} 
Recall that each $p\in P$ has a minimal open 
neighborhood $U_p = \{ q\in P \mid p\leq q\}$ and a minimal closed 
neighborhood $D_p = \{q\in P \mid p\geq q\}$ in the Alexandrov space $\asp(P)$. Moreover,  $P$ is a poset precisely when $\{p\} = D_p \cap U_p$ for all $p\in P$. Therefore, if $P$ is a poset, each singleton is locally closed. Conversely, if $\{p\}$ is locally closed, then $\{p\}=D\cap U$ for some closed  $D$ and open $U$. The minimality of $D_p$ and $U_p$ then shows
\[
\{p\} = D\cap U \supset D_p \cap U_p \supset \{p\}.
\] 
Hence $\{p\}=D_p\cap U_p$ and each singleton is locally closed.
\end{proof}

For more on the Alexandrov topology or Alexandrov spaces see, e.g., \cite{A1}, \cite{A2}, \cite[\S 4.2.1 Alexandrov Topology]{Curry}, \cite[Appendix A  Pre-orders and spaces]{Woolf}.

\section{Alexandrov decompositions}
\label{sec:decompositions}

A \emph{decomposition} of a space $X$ is a partition into subspaces, i.e.\ a set $\{X_i \mid i\in I\}$ of disjoint, non-empty subspaces whose union is $X$. We refer to the $X_i$ as the {\em strata} of the decomposition, and (by a mild abuse of notation) to the indexing set $I$ as the {\em set of strata}. 

There are two equivalent descriptions which we will use. Firstly, a decomposition can be viewed as the equivalence relation 
\[
x \sim x' \iff x,x' \in X_i \ \text{for some}\ i\in I
\] 
with equivalence classes the strata, and set of equivalence classes  $X/\!\!\sim\  \cong  I$. Secondly, it can be viewed as the quotient map $\pi \colon X \to I$ defined by
\[
\pi(x) = i \iff x\in X_i.
\]
We refer to this as the {\em decomposition map}. The \emph{decomposition space} $I_\pi$ is the set $I$ of strata equipped with the quotient topology from $\pi$, i.e.\ with the finest topology (most open sets) such that $\pi$  is continuous. 

The set $I$ of strata has a second interesting topology. Let $I_\leq = \left(\asp\circ \spp\right)(I_\pi)$ be the Alexandrov space of the specialization preorder of $I_\pi$. We refer to this as the  \emph{decomposition preorder} on $I$. It is defined by $i\leq j \iff i \in \overline{\{j\}}$ where the latter denotes the closure in $I_\pi$. By definition 
\[
\overline{\{j\}} = \{ i \in I \mid i\leq j\} = D_j
\]
is the minimal (closed) subset containing 
$j$ whose preimage in $X$ is closed. Equivalently, 
$\pi^{-1}(D_j)$ is the minimal closed union of strata containing 
$X_j$. Therefore, in terms of the decomposition 
\[
i \leq j \iff X_i\ \text{is contained in the minimal closed union of strata containing}\ X_j.
\]
 This is strictly weaker than $X_i \cap \overline{X_j} \neq \emptyset$: for example the decomposition $\{0\} \cup (0,1]\cup (1,2]$ of the closed interval $[0,2]$ has decomposition preorder $0 \leq 1 \leq 2$ even though $\{0\} \cap \overline{(1,2]} =\emptyset$. Here $X_0 := \{0\}, X_1 := (0,1]$ and $X_2 :=(1, 2]$. 

The above two topologies on $I$ are related: The set-theoretic identity $I_\leq =\left(\asp\circ \spp\right)(I_\pi) \to I_\pi$ is the counit of the adjunction $\asp \dashv \spp$, with $\asp$ the fully faithful inclusion of prosets into spaces. Hence, this map is generally continuous, and a homeomorphism precisely when $I_\pi$ is an Alexandrov space.
\begin{defn}
A decomposition $\{ X_i \mid i\in I\}$ of $X$ is \emph{Alexandrov} if its decomposition space $I_\pi$ is an Alexandrov space.
\end{defn}
\begin{pro}
\label{pro-alex}
The following are equivalent:
\begin{enumerate}
\item the decomposition $\{ X_i \mid i\in I\}$ of $X$ is Alexandrov,
\item the identity $I_\leq \to I_\pi$ is a homeomorphism,
\item the map $\pi \colon X\to I_\leq$ is continuous.
\end{enumerate}
\end{pro}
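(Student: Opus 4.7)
The plan is to run the cyclic implication $(1) \Rightarrow (2) \Rightarrow (3) \Rightarrow (1)$, using only the adjunction $\asp \dashv \spp$ (\cref{pro:proset alex adj}) and the universal property of the quotient topology. Throughout I will keep in mind that, since $I_\pi$ has the finest topology making $\pi \colon X \to I_\pi$ continuous, a second topology $\tau$ on the same underlying set $I$ satisfies ``$\pi \colon X \to (I,\tau)$ continuous'' if and only if $\tau$ is coarser than the quotient topology.

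The starting observation is that the identity map $I_\leq = (\asp\circ\spp)(I_\pi) \to I_\pi$ is the counit of the adjunction $\asp \dashv \spp$ at $I_\pi$, hence continuous. In particular the Alexandrov topology of $I_\leq$ is always at least as fine as the quotient topology of $I_\pi$.

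For $(1) \Rightarrow (2)$, assume $I_\pi$ is Alexandrov. Then the argument given in the proof of \cref{pro:proset alex adj} applies verbatim: writing an arbitrary closed $D \subseteq I_\pi$ as $D = \bigcup_{i \in D} \overline{\{i\}}$ exhibits $D$ as closed in $(\asp\circ\spp)(I_\pi) = I_\leq$, so the continuous identity $I_\leq \to I_\pi$ is also a closed map, hence a homeomorphism. For $(2) \Rightarrow (3)$, composing the continuous decomposition map $\pi \colon X \to I_\pi$ with the inverse homeomorphism $I_\pi \to I_\leq$ gives a continuous map $\pi \colon X \to I_\leq$.

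The heart of the argument is $(3) \Rightarrow (1)$. Continuity of $\pi \colon X \to I_\leq$ says, by the universal property of the quotient topology on $I_\pi$, that the Alexandrov topology of $I_\leq$ is coarser than the topology of $I_\pi$. But by the opening observation it is also finer. Hence the two topologies coincide, and $I_\pi = I_\leq$ is Alexandrov, establishing $(1)$. No step is really an obstacle: the only place requiring care is distinguishing ``coarser'' from ``finer'' when translating continuity of the identity back and forth between $I_\pi$ and $I_\leq$; once that bookkeeping is in place, the equivalences are immediate from the adjunction.
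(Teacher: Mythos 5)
Your proposal is correct and follows essentially the same route as the paper: (1) and (2) are identified via the restriction of the adjunction $\asp \dashv \spp$ to an equivalence on Alexandrov spaces (Proposition~\ref{pro:proset alex adj}), and (3) is reconciled with (1)/(2) by playing the continuity of the counit $I_\leq \to I_\pi$ against the universal property of the quotient topology, just as in the paper's proof. One small caution on your inline re-derivation of (1)~$\Rightarrow$~(2): the decomposition $D=\bigcup_{i\in D}\overline{\{i\}}$ should be applied to a set $D$ closed in $I_\leq$ (using that $I_\pi$ is Alexandrov to conclude that this union of sets closed in $I_\pi$ is closed in $I_\pi$), not to a set closed in $I_\pi$ --- as written you only re-establish continuity of the identity and never use the hypothesis that $I_\pi$ is Alexandrov, though the cited Proposition~\ref{pro:proset alex adj} already supplies the homeomorphism you need.
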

\begin{proof}
(1) $\implies$ (2) because $I_\leq  = \left(\asp\circ \spp\right)(I_\pi) = I_\pi$ when $I_\pi$ is Alexandrov. It is immediate that (2) $\implies$ (1)  and (3). Finally, (3) $\implies$ (2) because the quotient topology is initial amongst topologies on $I$ for which $\pi$ is continuous, i.e.\ when $\pi \colon X \to I_\leq$ there is a commutative diagram
\[
\xymatrix
{
X \ar[d]_{\pi} \ar[dr]^{\pi} & \\
I_\pi \ar[r]_{\id} & I_\leq
}
\]
of continuous maps. Since the identity $I_\leq \to I_\pi$ is also continuous, it is a homeomorphism. 
\end{proof}

The pointwise decomposition $\{ \{x\} \mid x\in X\}$ of a space $X$ is Alexandrov if, and only if, the space $X$ is Alexandrov (because the decomposition space is just $X$ itself). For example, the pointwise decomposition of the real line is not Alexandrov. In contrast, every finite decomposition is Alexandrov, because every finite topological space is evidently an Alexandrov space. The following result generalizes this example to give a wide class of Alexandrov decompositions.

\begin{pro}
\label{general-l.f.} 
Let $\{ X_i \mid i\in I\}$ be a decomposition of $X$. Suppose $X$ has the final topology with respect to a family of maps $f_j : Y_j \to X$ for $j\in J$ such that $\{ i\in I \mid f_j^{-1}(X_i) \neq \emptyset\}$ is finite for each $j\in J$. Then the decomposition is Alexandrov. 
\end{pro}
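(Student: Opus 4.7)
The strategy is to show that $I_\pi$ inherits the final topology from the composites $\pi\circ f_j\colon Y_j\to I_\pi$, and then exploit the fact that each such composite factors through a finite subset of $I$ to reduce arbitrary intersections of open sets to finite ones.

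First I would observe that $I_\pi$ carries the final topology with respect to the family $\{\pi\circ f_j\}_{j\in J}$. Indeed, by hypothesis a set $V\subseteq X$ is open if and only if $f_j^{-1}(V)$ is open in $Y_j$ for every $j\in J$, and by the definition of $I_\pi$ as the quotient, a set $U\subseteq I$ is open in $I_\pi$ if and only if $\pi^{-1}(U)$ is open in $X$. Composing these two characterisations gives that $U$ is open in $I_\pi$ if and only if $(\pi\circ f_j)^{-1}(U)$ is open in $Y_j$ for every $j\in J$, which is exactly the final topology from $\{\pi\circ f_j\}$.

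Next I would set $F_j:=\{i\in I \mid f_j^{-1}(X_i)\neq\emptyset\}$ and note three things: (i) by hypothesis $F_j$ is finite; (ii) the image of $\pi\circ f_j$ lies in $F_j$, so $(\pi\circ f_j)^{-1}(U)=(\pi\circ f_j)^{-1}(U\cap F_j)$ for every $U\subseteq I$; and (iii) since $F_j$ is finite, there are only finitely many distinct subsets of the form $U\cap F_j$.

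The key step is then the verification that an arbitrary intersection $\bigcap_{\alpha}U_\alpha$ of open sets of $I_\pi$ is open. Fix $j\in J$. The traces $\{U_\alpha\cap F_j\}_\alpha$ form a collection of subsets of the finite set $F_j$, so only finitely many are distinct, say $U_{\alpha_1}\cap F_j,\dots,U_{\alpha_n}\cap F_j$. Hence
\[
(\pi\circ f_j)^{-1}\!\Big(\bigcap_\alpha U_\alpha\Big)=\bigcap_\alpha (\pi\circ f_j)^{-1}(U_\alpha\cap F_j)=\bigcap_{k=1}^{n}(\pi\circ f_j)^{-1}(U_{\alpha_k}),
\]
which is a finite intersection of open sets in $Y_j$, hence open. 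As this holds for each $j\in J$, the final-topology characterisation above forces $\bigcap_\alpha U_\alpha$ to be open in $I_\pi$. Thus $I_\pi$ is an Alexandrov space, and the decomposition is Alexandrov.

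The only mildly subtle point is the initial identification of the quotient topology on $I$ as a final topology with respect to the composites $\pi\circ f_j$; once this is in place, the finite-trace argument is entirely mechanical and no genuine obstacle remains.
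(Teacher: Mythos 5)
Your proof is correct, but it takes a different route from the paper. The paper factors the argument through two lemmas phrased in terms of the decomposition preorder: first, that for any continuous map the inclusion of the induced decomposition's index set is continuous for the Alexandrov topologies of the decomposition preorders (this uses the functor $\asp\circ\spp$), and second, that for a final topology the decomposition of $X$ is Alexandrov if and only if the induced decomposition of each $Y_j$ is; the hypothesis then enters only through the fact that each induced decomposition is finite, hence Alexandrov, and the criterion of Proposition \ref{pro-alex} (continuity of $\pi\colon X\to I_\leq$) does the rest. You instead stay entirely inside the quotient topology: you use transitivity of final topologies to see that $I_\pi$ carries the final topology for the composites $\pi\circ f_j$, and then verify the Alexandrov condition directly by the finiteness-of-traces argument, since each $\pi\circ f_j$ has image in the finite set $F_j$, an arbitrary intersection of open sets pulls back to a finite intersection in each $Y_j$. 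Both steps are valid (in particular the identification of the quotient topology on $I$ as the final topology for $\{\pi\circ f_j\}$ is exactly the standard composition property of final topologies, and the reduction of $\bigcap_\alpha(\pi\circ f_j)^{-1}(U_\alpha)$ to a finite intersection is sound because subsets of $F_j$ take only finitely many values). What your approach buys is brevity and self-containedness: no appeal to the specialization preorder, the adjunction $\asp\dashv\spp$, or Lemma \ref{final-lem1}. What the paper's approach buys is reusability: Lemma \ref{final-lem2} is an if-and-only-if statement about arbitrary final topologies, which the paper also exploits for the corollaries on locally finite decompositions and compactly generated spaces, whereas your argument proves only the one implication needed here.
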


Recall that the \emph{final topology} is the finest topology on $X$ such that $f_j : Y_j \to X$ is continuous for each $j\in J$. Explicitly, $U\subseteq X$ is open in the final topology if, and only if,  $f_j^{-1}(U)$ is open in $Y_j$ for all $j\in J$. For example, the final topology for a family consisting of a single map $f\colon X \to Y$ is the quotient topology. The final topology has the universal property that  $g\colon X \to Z$ is continuous if, and only if, the composite $g \circ f_j \colon Y_j \to Z$ is continuous for each $j\in J$. Before proving Proposition \ref{general-l.f.},  we give two corollaries.

\begin{cor}
\label{cor:lf is alex}
Any locally finite decomposition is Alexandrov.
\end{cor}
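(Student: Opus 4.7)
The plan is to deduce this directly from Proposition \ref{general-l.f.} by choosing the right family of maps, namely the inclusions of the locally finite open neighborhoods guaranteed by the hypothesis.

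Assume $\{X_i \mid i \in I\}$ is a locally finite decomposition of $X$, so for each $x \in X$ there is an open neighborhood $U_x \subseteq X$ meeting only finitely many strata. First I would note that the family of inclusions $\iota_x \colon U_x \hookrightarrow X$ (indexed by $x \in X$) exhibits $X$ with its given topology as the final topology with respect to this family. Indeed, the $U_x$ form an open cover of $X$, and it is a standard fact that for an open cover by subspaces $\{U_x\}$, a set $V \subseteq X$ is open if and only if $V \cap U_x$ is open in $U_x$ for every $x$; this is exactly the defining property of the final topology with respect to the inclusions.

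Next I would check the finiteness hypothesis of Proposition \ref{general-l.f.}: for each $x$, the set $\{i \in I \mid \iota_x^{-1}(X_i) \neq \emptyset\} = \{i \in I \mid U_x \cap X_i \neq \emptyset\}$ is finite by the local finiteness of the decomposition. So both hypotheses of Proposition \ref{general-l.f.} are met, and we conclude that the decomposition is Alexandrov.

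There is no real obstacle here; the content is entirely in Proposition \ref{general-l.f.}, and the only thing to verify is the (standard) fact that an open cover realizes $X$ as a final topology. If one preferred a self-contained argument avoiding the general proposition, one could instead check directly that every point $i \in I_\pi$ has a minimal open neighborhood: given $i$, pick $x \in X_i$ and note that $\pi(U_x)$ is a finite subset of $I_\pi$ containing $i$, so inside this finite subspace $i$ has a minimal open neighborhood, which one can verify is also minimal in $I_\pi$. But invoking Proposition \ref{general-l.f.} is cleaner.
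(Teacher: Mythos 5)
Your proposal is correct and follows essentially the same route as the paper: cover $X$ by open sets meeting only finitely many strata, observe that the inclusions of any open cover realize the original topology as the final topology, and apply Proposition \ref{general-l.f.}. The only (immaterial) difference is that you index the cover by points of $X$ while the paper allows an arbitrary such cover.
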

\begin{proof}
Choose an open covering $X=\bigcup_{j\in J}U_j$ by subsets $U_j$ intersecting only finitely many strata. The original topology of $X$ is the final topology of $X$ for the family of the inclusions $\imath_i: U_i \to X$ (this is true for any open covering, without requiring the above condition `subsets $U_j$ intersecting only finitely many strata'). So the decomposition is Alexandrov by Proposition \ref{general-l.f.}.
\end{proof}
\begin{cor}
\label{cor:compact finite is alex}
Any decomposition of a compactly generated space for which each compact subspace intersects only finitely many strata is Alexandrov.
\end{cor}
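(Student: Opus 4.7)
The plan is to apply Proposition \ref{general-l.f.} directly, with $X$ being the compactly generated space in question and the family of test maps being the inclusions of compact subspaces into $X$.

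First, I would recall (or invoke) the defining property of a compactly generated space: namely, that $X$ carries the final topology with respect to the family $\{\imath_K : K \hookrightarrow X\}$ indexed by compact subspaces $K \subseteq X$ (or, depending on convention, by continuous maps from compact Hausdorff spaces; the argument below is insensitive to this choice, since one may replace $K$ with the image of such a map, which is still compact and intersects only finitely many strata by hypothesis).

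Next, I would verify the finiteness hypothesis of Proposition \ref{general-l.f.} for this family. For each compact $K \subseteq X$, we have $\imath_K^{-1}(X_i) = K \cap X_i$, and by assumption the set of $i \in I$ for which $K \cap X_i \neq \emptyset$ is finite. Thus the hypothesis ``$\{i \in I \mid f_j^{-1}(X_i) \neq \emptyset\}$ is finite for each $j \in J$'' is exactly what was assumed.

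With both hypotheses of Proposition \ref{general-l.f.} in hand, the conclusion is immediate: the decomposition $\{X_i \mid i \in I\}$ is Alexandrov. I do not foresee any real obstacle here; the entire content of the corollary is a translation of the compactly generated hypothesis into the language of final topologies, together with an application of the more general Proposition \ref{general-l.f.}. The only mild subtlety worth a sentence is pointing out that the stated condition on compact subspaces is exactly the pullback condition demanded in Proposition \ref{general-l.f.}.
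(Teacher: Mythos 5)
Your proposal is correct and matches the paper's own argument: both recast the compactly generated hypothesis as saying that $X$ has the final topology for the inclusions of its compact subspaces, observe that the finiteness hypothesis of Proposition \ref{general-l.f.} is exactly the assumption on compact subspaces, and conclude via that proposition. No gaps.
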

\begin{proof}
Recall that a space $X$ is compactly generated\footnote{As to this notion, there are several slightly different definitions, e.g., see \cite{nLab, wiki-cg}.}
if a subset $U$ is open in $X$ if and only if $U \cap K$ is open in $K$ for every compact $K \subset X$. In other words, $X$ is compactly generated if it has the final topology for the family of inclusions of compact subspaces. Thus, if each compact subspace intersects only finitely many strata,  the decomposition is Alexandrov by Proposition \ref{general-l.f.}.
\end{proof}

We now prove Proposition \ref{general-l.f.} as a consequence of the following two lemmas.
\begin{lem}
\label{final-lem1}
Let $f \colon X' \to X$ be a continuous map. Let $\{ X_i \mid i\in I\}$ be a decomposition of $X$, and $\{f^{-1}(X_i) \mid i\in I' \}$ the induced decomposition of $X'$ with indexing set
\[
I' = \{ i\in I \mid f^{-1}(X_i)\neq \emptyset\}.
\]
Then the inclusion $e\colon I'\hookrightarrow I$ is continuous in the Alexandrov topologies of the  decomposition preorders on $I'$ and $I$.
\end{lem}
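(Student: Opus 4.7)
The plan is to reduce the statement to the functoriality of the adjunction $\asp \dashv \spp$ established in Proposition \ref{pro:proset alex adj}. The point is that the inclusion $e$ is already built as the map induced on sets of strata by $f$, so it should inherit continuity from $f$ in a suitable chain of topologies on $I$ and $I'$.

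First, I would note that the decomposition maps $\pi \colon X \to I_\pi$ and $\pi' \colon X' \to I'_{\pi'}$ are continuous by definition of the quotient topology, and that the construction of the induced decomposition gives a commutative square
\[
\xymatrix{
X' \ar[r]^{f} \ar[d]_{\pi'} & X \ar[d]^{\pi} \\
I' \ar[r]^{e} & I.
}
\]
Since $\pi \circ f$ is continuous and $\pi'$ is a quotient map, the universal property of the quotient topology on $I'_{\pi'}$ implies that $e \colon I'_{\pi'} \to I_{\pi}$ is continuous.

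Next, I would apply the functor $\spp$ to this continuous map. By Proposition \ref{pro:proset alex adj}, $\spp$ sends a continuous map of topological spaces to a monotone map of their specialization preorders; applied to $e \colon I'_{\pi'} \to I_{\pi}$, this tells us exactly that $e \colon (I', \leq') \to (I, \leq)$ is monotone, where $\leq'$ and $\leq$ are the decomposition preorders (i.e.\ the specialization preorders of $I'_{\pi'}$ and $I_\pi$). Applying the functor $\asp$ then converts this monotone map into a continuous map of Alexandrov spaces $e \colon I'_{\leq'} \to I_{\leq}$, which is the desired conclusion.

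There is no real obstacle in this argument; the only thing to be careful about is to keep the two different topologies on $I$ (the quotient topology $I_\pi$ and the Alexandrov topology $I_\leq$) separate, and to remember that $\asp\circ\spp$ is in general a strictly finer topology than the original. The hypothesis that $e$ is the set-theoretic inclusion plays no role beyond making the square above commute; the same argument shows more generally that any continuous map $f \colon X' \to X$ compatible with decompositions induces a monotone (hence Alexandrov-continuous) map between the decomposition preorders.
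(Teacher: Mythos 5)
Your proof is correct and follows essentially the same route as the paper: the same commutative square gives continuity of $e \colon I'_{\pi'} \to I_{\pi}$ via the universal property of the quotient topology, and then applying $\asp \circ \spp$ (which you spell out as $\spp$ followed by $\asp$) yields the continuous map $I'_{\leq'} \to I_{\leq}$. The extra care you take in separating the quotient and Alexandrov topologies on $I$ is a welcome clarification, but the argument is the paper's own.
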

\begin{proof}
By definition there is a commutative diagram
\[
\xymatrix
{
X' \ar[r]^{f} \ar[d]_{\pi'} & X \ar[d]^{\pi}\\
I' \ar@{^{(}-_>}[r]^e & I
}
\]
where $\pi'$ and $\pi$ are the respective decomposition maps. Thus the inclusion $e$ is continuous in the quotient topologies on $I'$ and $I$ respectively, that is, $e \colon I'_{\pi'} \hookrightarrow I_\pi$  is continuous. Applying the functor $\asp \circ \spp$  to this map gives the result. 
\end{proof}

\begin{lem}
\label{final-lem2}
Suppose $X$ has the final topology with respect to a family of maps $f_j : Y_j \to X$ for $j\in J$. Then a decomposition  $\{X_i \mid i\in I\}$ of $X$ is Alexandrov if, and only if, the induced decomposition $\{ f_j^{-1}(X_i) \mid i\in I_j\}$ of each $Y_j$ is Alexandrov, where $I_j = \{ i\in I \mid f_j^{-1}(X_i) \neq \emptyset\}$.
\end{lem}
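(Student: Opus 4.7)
I would use the criterion that a space is Alexandrov iff arbitrary intersections of open sets are open, together with the identity $(\pi'_j)^{-1}(V) = f_j^{-1}(\pi^{-1}(V))$ for $V \subseteq I_j$ and the universal property of the final topology on $X$. Continuity of each $f_j$, automatic since $X$ carries the final topology, is the other main tool.

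\textbf{Forward direction.} Given a family $\{U_\alpha\}$ of opens in $I_\pi$, for each $j$ the continuity of $f_j$ makes $(\pi'_j)^{-1}(U_\alpha \cap I_j) = f_j^{-1}(\pi^{-1}(U_\alpha))$ open in $Y_j$, so $U_\alpha \cap I_j$ is open in $(I_j)_{\pi'_j}$. The Alexandrov hypothesis at each $Y_j$ then yields that $\bigl(\bigcap_\alpha U_\alpha\bigr)\cap I_j = \bigcap_\alpha(U_\alpha \cap I_j)$ is open in $(I_j)_{\pi'_j}$, so $f_j^{-1}\bigl(\pi^{-1}(\bigcap_\alpha U_\alpha)\bigr)$ is open in $Y_j$ for every $j$. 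The universal property of the final topology forces $\pi^{-1}(\bigcap_\alpha U_\alpha)$ to be open in $X$, whence $\bigcap_\alpha U_\alpha$ is open in $I_\pi$.

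\textbf{Reverse direction and main obstacle.} For the converse I would appeal to Proposition~\ref{pro-alex}(3). Continuity of $\pi : X \to I_\leq$ composes with each $f_j$ to give continuity of $\pi\circ f_j : Y_j \to I_\leq$; factoring through the inclusion $I_j \hookrightarrow I_\leq$ shows that $\pi'_j$ is continuous to $I_j$ with the subspace topology, which is Alexandrov because subspaces of Alexandrov spaces are. The task is then to identify this subspace topology with the quotient topology $(I_j)_{\pi'_j}$. This is the principal obstacle: the quotient topology is a priori strictly finer, and an open set in $(I_j)_{\pi'_j}$ need not arise from an open set of $I_\pi$. The strategy I would attempt is to show, for each $i\in I_j$, that the restriction $U_i\cap I_j$ of the minimal open neighbourhood $U_i$ of $i$ in $I_\pi$ remains minimal in $(I_j)_{\pi'_j}$; the hard step is to verify that any open $V$ of $(I_j)_{\pi'_j}$ containing $i$ satisfies $U_i\cap I_j \subseteq V$, which would have to leverage the final-topology description of $X$ globally rather than the continuity of each individual $f_j$ in isolation.
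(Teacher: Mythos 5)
Your argument for the ``if'' direction (each induced decomposition Alexandrov $\Rightarrow$ the decomposition of $X$ Alexandrov) is correct and complete, and it is more direct than the paper's: instead of routing through Proposition \ref{pro-alex}, Lemma \ref{final-lem1} and the universal property of the final topology, you verify the Alexandrov condition on $I_\pi$ directly from the identity $\pi_j^{-1}(V)=f_j^{-1}\bigl(\pi^{-1}(V)\bigr)$ and the explicit description of the final topology. This is also the only direction the paper ever uses downstream (it is what Proposition \ref{general-l.f.}, and hence Corollary \ref{cor:lf is alex}, rely on).

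The ``only if'' direction is a genuine gap in your proposal, and the obstacle you flag is fatal rather than technical: that implication is false as stated, so neither your minimal-neighbourhood strategy nor any other argument can close it. Concretely, let $Y_1=\mathbb{N}$ with the cofinite topology, $Y_2=\mathbb{N}$ with the indiscrete topology, let $f_1,f_2$ be the identity maps onto the set $X=\mathbb{N}$ equipped with the final topology for $\{f_1,f_2\}$ (which is the indiscrete topology), and take the pointwise decomposition $\{\{n\}\mid n\in\mathbb{N}\}$ of $X$. Then $I_\pi$ is indiscrete, hence Alexandrov, while the induced (pointwise) decomposition of $Y_1$ has decomposition space $\mathbb{N}$ with the cofinite topology, which is not Alexandrov; in particular the restriction $U_0\cap I_1=\mathbb{N}$ of the minimal open neighbourhood of $0$ in $I_\leq$ is open in $(I_1)_{\pi_1}$, but $0$ has no minimal open neighbourhood there at all, so restricted minimal neighbourhoods cannot remain minimal. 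The point is exactly the one you isolate: the quotient topology on $I_j$ can be strictly finer than the subspace topology inherited from $I_\leq$, and a topology finer than an Alexandrov topology need not be Alexandrov. Be aware that the paper's own proof of this direction silently makes the identification you were suspicious of --- the last equivalence in its chain treats $(I_j)_\leq$ as if it were $I_j$ with the subspace topology of $I_\leq$ --- so it does not supply the missing idea; the safe statement is the one-way implication you proved, which suffices for everything the lemma is used for later in the paper.
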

\begin{proof}
For each $j\in J$  let   $\pi_j \colon Y_j \to I_j$ be the decomposition map of the induced decomposition, and $e_j \colon I_j \to I$ the inclusion. By definition the diagram
\[
\xymatrix
{
Y_j \ar[r]^{f_j} \ar[d]_{\pi_j} & X \ar[d]^{\pi}\\
I_j \ar@{^{(}-_>}[r]^{e_j} & I
}
\]
commutes. By Lemma \ref{final-lem1} the inclusion $e_j$ is continuous in the Alexandrov topologies from the respective decomposition preorders on $I_j$ and $I$. Using Proposition \ref{pro-alex} and  the facts that $X$ has the final topology with respect to the family of maps $f_j : Y_j \to X$ for $j\in J$ , and that a map to a subspace is continuous if, and only if, the composite with the inclusion of the subspace is continuous, we have
\begin{align*}
\pi \colon X \to I_\leq \ \text{is continuous}
&\iff \pi\circ f_j \colon Y_j \to I_\leq \ \text{is continuous for all}\ j\in J \\
&\iff e_j \circ \pi_j \colon Y_j \to I_\leq \ \text{is continuous for all}\ j\in J\\
& \iff \pi_j \colon Y_j \to (I_j)_\leq \ \text{is continuous for all}\ j\in J.
\end{align*}
Hence, by Proposition \ref{pro-alex}, the decomposition of $X$ is Alexandrov if, and only if, the induced decompositions of each of the $Y_j$ are Alexandrov.
\end{proof}

\begin{proof}[Proof of Proposition \ref{general-l.f.}]
This follows immediately from Lemma \ref{final-lem2} and the fact that any finite decomposition is Alexandrov. 
\end{proof}

Alexandrov decompositions are closely related to poset-stratified spaces. Indeed, any Alexandrov decomposition $\{X_i \mid i\in I\}$ has a natural coarsening which is poset-stratified. Let $(I,\leq)$ be the proset with the decomposition preorder and let $I/\sim$ be the set of equivalence classes of $i \sim j \iff i\leq j$ and $j\leq i$,  equipped with the induced partial order $[i] \preccurlyeq [j] \iff i \leq j$ (cf. \cite[\S 5]{Yo}). The quotient map  $I \to I/\!\sim \colon i \mapsto [i]$ is monotone, thus the assoicated map $I_{\leq} \to (I/\sim)_\preccurlyeq$ is continuous, so the composite
\[
\xymatrix
{X \ar[r]^{\pi} & I_\leq \ar[r]{} & (I/\sim)_\preccurlyeq
}
\]
is continuous. This exhibits $X$ as a poset-stratified space: its strata $X_{[i]} = \bigcup_{j\in [i]} X_j$ are unions of strata of the original decomposition. These coarsened strata have a natural topological description.
Recall that $D_i = \{ j\in I \mid j\leq i\}$ is the closure of $\{i\}$ in $I_\pi$  and that $\pi^{-1}(D_i)$ is the minimal closed union of strata containing $X_i$. Then 
\[
X_{[i]} = X_{[j]} \iff \pi^{-1}(D_i) = \pi^{-1}(D_j)
\]
i.e., the minimal closed unions of strata containing $X_i$ and $X_j$ agree. 

We are interested in when the original stratification is poset-stratified, so we now consider conditions under which the decomposition preorder is a partial order. 
\begin{defn}\label{stra-2-1}
A decomposition $X = \bigsqcup_{i\in I} X_i$ is a \emph{poset-stratification} with respect to a partial order $\preccurlyeq$ on the set $I$ of strata if the decomposition map $\pi \colon X \to I_\preccurlyeq$ is continuous with respect to the Alexandrov topology defined by $\preccurlyeq$. 
\end{defn}
It is important to note that, just as a poset-stratified space need not be stratified in the classical sense, a poset-stratification need not be a stratification in the sense of Definition~\ref{stra-1}.
\begin{pro}
\label{poset-strat}
Suppose $\{X_i \mid i\in I\}$ is an Alexandrov decomposition of $X$. Then the following are equivalent:
\begin{enumerate}
\item the decomposition is a poset-stratification with respect to a partial order $\preccurlyeq$ on $I$,
\item the decomposition preorder $\leq$ is a partial order and the decomposition is a poset-stratification with respect to $\leq$,
\item each stratum $X_i$ is open in  $\pi^{-1}(D_i)$, in particular each stratum is locally closed.
\end{enumerate}
When these equivalent conditions hold, 
the identity $(I,\leq) \to (I,\preccurlyeq)$ is monotone. 
\end{pro}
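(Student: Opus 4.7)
The plan is to prove $(1) \Leftrightarrow (2)$ and $(2) \Leftrightarrow (3)$, extracting the final monotonicity statement as a by-product of $(1) \Rightarrow (2)$; the implication $(2) \Rightarrow (1)$ is immediate by taking $\preccurlyeq\, =\, \leq$. The common mechanism is that the Alexandrov hypothesis, combined with Proposition \ref{pro-alex}, gives $I_\pi = I_\leq$, so the quotient topology lets one translate statements about open or closed subsets of $I$ into statements about unions of strata in $X$.

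For $(1) \Rightarrow (2)$, I would use the fact that $I_\pi$ carries the quotient topology: continuity of $\pi \colon X \to I_\preccurlyeq$ forces the identity $I_\leq = I_\pi \to I_\preccurlyeq$ to be continuous, hence monotone by the equivalence $\asp \dashv \spp$. Antisymmetry of $\preccurlyeq$ then transfers back, since $i \leq j$ and $j \leq i$ imply $i \preccurlyeq j$ and $j \preccurlyeq i$, whence $i = j$. Thus $\leq$ is a partial order; Proposition \ref{pro-alex} supplies continuity of $\pi \colon X \to I_\leq$, and the same argument yields the final assertion that the identity $(I,\leq) \to (I,\preccurlyeq)$ is monotone.

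For $(2) \Rightarrow (3)$, since $\leq$ is a partial order, the characterization recalled inside Lemma \ref{poset-lc} gives $\{i\} = D_i \cap U_i$ in $I_\leq$, where $U_i = \{ j \in I \mid i \leq j\}$ is the minimal open neighborhood of $i$. Taking $\pi$-preimages then yields $X_i = \pi^{-1}(D_i) \cap \pi^{-1}(U_i)$, which exhibits $X_i$ as an open subset of the closed set $\pi^{-1}(D_i) \subseteq X$, and in particular as a locally closed subset of $X$.

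The step I expect to be the main obstacle is $(3) \Rightarrow (2)$, since here the partial-order structure must be built from a purely local topological hypothesis. My plan is to show, via Lemma \ref{poset-lc}, that each singleton $\{i\}$ is locally closed in $I_\leq$. Because $X_i$ is open in $\pi^{-1}(D_i)$, the complement
\[
\pi^{-1}(D_i) \setminus X_i = \pi^{-1}(D_i \setminus \{i\})
\]
is closed in $\pi^{-1}(D_i)$, and hence in $X$ (since $\pi^{-1}(D_i)$ is itself closed in $X$ by the Alexandrov hypothesis). The quotient topology on $I_\pi = I_\leq$ then forces $D_i \setminus \{i\}$ to be closed, so $U := (I \setminus D_i) \cup \{i\}$ is an open neighborhood with $U \cap D_i = \{i\}$. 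Lemma \ref{poset-lc} thereby promotes $\leq$ to a partial order, and combining with Proposition \ref{pro-alex} completes the argument for (2).
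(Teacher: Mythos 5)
Your proposal is correct and follows essentially the same route as the paper: (1)$\iff$(2) via the continuous identities $I_\leq \to I_\pi \to I_\preccurlyeq$ together with transfer of antisymmetry back to $\leq$, (2)$\implies$(3) by pulling back $\{i\}=U_i\cap D_i$ along $\pi$, and (3)$\implies$(2) by showing $\{i\}$ is locally closed in $I_\leq$ and applying Lemma \ref{poset-lc}. The only (harmless) difference is in (3)$\implies$(2), where you verify directly from the quotient topology that $D_i\setminus\{i\}$ is closed, since its saturated preimage $\pi^{-1}(D_i)\setminus X_i$ is closed in $X$, whereas the paper invokes the general fact that a quotient map restricts to a quotient map over a closed subset; your version is slightly more self-contained but amounts to the same step.
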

\begin{proof}
(1) $\iff$ (2): Suppose $\pi \colon X \to I_\preccurlyeq$ is a poset-stratified space. 
Recall that the identity $I_\leq =\left(\asp\circ \spp\right)(I_\pi) \to I_\pi$ is continuous. If $\pi \colon X\to I_\preccurlyeq$ is continuous,  then the identity $I_\pi \to I_\preccurlyeq$ is continuous, because $I_\pi$ has the quotient topology induced from the decomposition map $\pi:X \to I$. Composing these two continuous identity maps shows that  the identity $I_\leq \to I_\preccurlyeq$ is continuous. Equivalently, the identity $(I,\leq) \to (I,\preccurlyeq)$ is monotone. Thus the decomposition preorder $\leq$ is a partial order because $i\leq j \leq i \implies i\preccurlyeq j \preccurlyeq i \implies i=j$. Finally, $\pi \colon X \to I_\leq$ is continuous by Proposition \ref{pro-alex} (3) since the decomposition is Alexandrov.

The converse is immediate.

(2) $\implies$ (3): When $\pi \colon X \to I_\leq$ is a poset-stratified space, the stratum 
\[
X_i = \pi^{-1}(i) = \pi^{-1}(U_i \cap D_i) =  \pi^{-1}(U_i) \cap \pi^{-1}(D_i)
\]
where $U_i = \{ j\in I \mid i\leq j\}$ and $D_i = \{j\in I \mid j\leq i\}$ are respectively the minimal open and closed 
neighborhoods of $i$ in the Alexandrov space $I_\leq$.  Therefore $X_i$ is an open subset of $\pi^{-1}(D_i)$. Here we note that it follows from the proof of Lemma \ref{poset-lc} that $\{i\} = U_i \cap D_i$, since the decomposition preorder $\leq$ is a partial order.

(3) $\implies$ (2): Since the decomposition is Alexandrov, the map $\pi \colon X \to I_\pi=I_\leq$ is continuous. Therefore we only need to show that the decomposition preorder $\leq$ is a partial order.  The subspace topology on $D_i$ from $I_\pi$ is the quotient topology induced from the restriction $\pi^{-1}(D_i) \to D_i$ of $\pi$ to its preimage. (This is a well-known general fact\footnote{For example, see Ryszard Engelking's book \cite[2.4.15. Proposition]{Engel}. $D_i$ being a closed set is a key, and in fact, this fact also holds in the case of $\pi^{-1}(B) \to B$ for any open set $B$. However, this fact \emph{does not necessarily hold} if $B$ is neither an open set nor a closed set (e.g., see \cite[2.4.17. Example]{Engel}). A surjective continuous map $f:X \to Y$ is called \emph{hereditarily quotient} if for every subset $B \subset Y$ the restriction $f_B:f^{-1}(B) \to B$ is a quotient map \cite[2.4.F]{Engel}. In this sense we can say that \emph{any quotient map $f:X \to Y$  is hereditarily quotient `with respect to open subsets and closed subsets' of $Y$}. } about the quotient topology.) Therefore, since $X_i=\pi^{-1}(i)$ is open in $\pi^{-1}(D_i)$ by assumption, $\{i\}$ is open in $D_i$. This means that $\{i\}$ is locally closed in $I_\pi=I_\leq$. Hence the  decomposition preorder is a partial order by Lemma \ref{poset-lc} as required.
\end{proof}

The following example shows that Proposition \ref{poset-strat} may fail for a decomposition which is not Alexandrov.
\begin{ex}\label{notalex} Let $X =\{0\} \cup \{\frac{1}{n} \, | \, n \in \mathbb N\}$ be the subspace of the real line $\mathbb R$ with its pointwise decomposition so that the index set $I$ is $X$ itself. The decomposition map $\pi:X \to X$ is the identity $\op{id}_X$ and the quotient space $X_{\pi}=X$. Then:
\begin{enumerate}
\item The pointwise decomposition is {\em not}  Alexandrov as the union $\bigcup_{n\in \mathbb N} \{\frac{1}{n}\} = \{ \frac{1}{n} \, | \, n \in \mathbb N\}$ of closed points is not closed.
\item The points of $X$ are closed so the preorder $\leq$ induced by this topology is discrete, i.e.\ $x \leq y \iff  x=y$. Therefore $X_{\leq} =\left(\asp\circ \spp\right)(X)$ is $X$ with the discrete topology and the identity $\pi=\op{id}_X: X \to X_{\leq}$ is not continuous, i.e.\  Proposition \ref{poset-strat} (2) is false.
\item Nevertheless, Proposition \ref{poset-strat} (1) holds for the partial order on $X$ defined by
$$ x \preccurlyeq y \Longleftrightarrow x =0 \ \text{or}\ x=y.$$
This is because the open sets of $X_{\preccurlyeq}$ are the points $\{\frac{1}{n}\}$ for $n \in \mathbb N$ together with $X$ and $\emptyset$; since each of these is open in $X$ the identity $\op{id}_X: X \to X_{\preccurlyeq}$
is a continuous map.
\end{enumerate}
\end{ex}

\begin{rem}
\label{rem:partial orders on I}
We do not claim, and indeed it is not true, that there is always a unique partial order with respect to which a decomposition is a poset stratification. For example, the two point discrete space $X=\{0,1\}$ with decomposition $X=\{0\}\sqcup\{1\}$ is a poset stratification with respect to any of the three partial orders (distinguished by whether $0\leq 1$ or $0\geq 1$ or neither) on its set $\{0,1\}$ of strata. 

However, the final statement of the proposition implies that the decomposition preorder is initial amongst those partial orders on the set of strata for which the given decomposition is a poset stratification.
\end{rem}

\begin{rem}
\label{lc example without fc}
The third equivalent condition in the above result is stronger than the assumption that each stratum locally closed. For example consider the decomposition
\[
S^1 = \left\{e^{\theta \sqrt{-1}} \mid 0 < \theta \leq \pi \right\} \sqcup \left\{e^{\theta \sqrt{-1}} \mid \pi < \theta \leq 2\pi \right\}
\]
of the unit circle. This is an Alexandrov decomposition, because it is finite, and both strata are locally closed. Nevertheless, the decomposition space is the indiscrete space with two points, which is not the Alexandrov space of any {\em partial order} on the set of strata. So the decomposition is not a poset stratification.
\end{rem}
\section{The Frontier Condition}
\label{sec:frontier condition}
A decomposition $\{X_i \mid i\in I\}$ of $X$ satisfies the \emph{frontier condition}  if 
\[
X_i \cap \overline{X_j} \neq \emptyset \implies X_i \subset \overline{X_j}.
\]
 Equivalently, it satisfies the frontier condition if the closure of each stratum is a union of strata. 

Recall from the previous section that we set $D_j = \{ i\in I \mid i\leq j\} = \overline{\{j\}}$ where the closure is taken in the decomposition space $I_\pi$ and that $\pi^{-1}(D_j)$ is then the minimal closed union of strata containing the stratum $X_j$.
\begin{pro}
\label{FC}
Let $\{X_i \mid i\in I\}$ be an Alexandrov decomposition of $X$. Then the following are equivalent:
\begin{enumerate}
\item the decomposition satisfies the frontier condition,
\item for all $j \in I$, the equality $\overline{X_j} = \pi^{-1}( D_j)$, holds, i.e., the closure of $X_j$ is the minimal closed union of strata containing $X_j$,
\item the decomposition preorder is given by $i \leq j \iff X_i \subset \overline{X_j}$,
\item  the decompostion map $\pi \colon X \to I_{\pi}$ is an open map.
\end{enumerate}
\end{pro}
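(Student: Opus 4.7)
The plan is to organize the four conditions as $(1) \Leftrightarrow (2) \Leftrightarrow (3)$ together with the cycle $(2) \Rightarrow (4) \Rightarrow (1)$. The two basic facts to exploit throughout are that $D_j = \overline{\{j\}}$ in $I_\pi$, so that $\pi^{-1}(D_j)$ is automatically closed and is the smallest saturated closed set containing $X_j$, and that, since the decomposition is Alexandrov, $I_\pi = I_\leq$, so a subset of $I$ is open in $I_\pi$ iff it is upward-closed in the decomposition preorder.

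For $(1) \Leftrightarrow (2)$, the inclusion $\overline{X_j} \subseteq \pi^{-1}(D_j)$ always holds since the right-hand side is closed and contains $X_j$. Assuming the frontier condition, $\overline{X_j}$ is a union of strata, so $\pi(\overline{X_j})$ is a closed set in $I_\pi$ containing $j$, hence contains $D_j$, giving the reverse inclusion. Conversely $(2)$ implies $(1)$ because $\pi^{-1}(D_j)$ is a union of strata by construction. For $(2) \Leftrightarrow (3)$, note that by definition $i \leq j \iff X_i \subseteq \pi^{-1}(D_j)$; under $(2)$ this reads $X_i \subseteq \overline{X_j}$, giving $(3)$, and conversely $(3)$ rewrites $\pi^{-1}(D_j)$ as $\bigcup_{X_i \subseteq \overline{X_j}} X_i \subseteq \overline{X_j}$, the other inclusion being automatic.

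For $(2) \Rightarrow (4)$, let $U \subseteq X$ be open. To show $\pi(U)$ is open in $I_\pi = I_\leq$, it suffices to show it is upward-closed: if $i \in \pi(U)$ (so $X_i \cap U \neq \emptyset$) and $i \leq j$, then by $(3)$ we have $X_i \subseteq \overline{X_j}$, so $\overline{X_j} \cap U \neq \emptyset$, and since $U$ is open this forces $X_j \cap U \neq \emptyset$, i.e., $j \in \pi(U)$. For $(4) \Rightarrow (1)$, suppose $X_i \cap \overline{X_j} \neq \emptyset$ with witness $x$, and fix $y \in X_i$. For any open neighborhood $U$ of $y$, openness of $\pi$ makes $\pi(U)$ open in $I_\pi$, so $\pi^{-1}(\pi(U))$ is a saturated open set containing $y$, hence containing all of $X_i$, in particular $x$. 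Since $x \in \overline{X_j}$, this open set meets $X_j$, which means that some stratum $X_k$ meeting $U$ coincides with $X_j$; thus $U \cap X_j \neq \emptyset$, so $y \in \overline{X_j}$.

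I expect $(4) \Rightarrow (1)$ to be the main obstacle: openness of $\pi$ is a global saturation condition, and one has to use it carefully to transfer the closure relation, which is known only at the particular point $x$, to an arbitrary point $y$ in the same stratum. The key manoeuvre is to replace an arbitrary open neighborhood of $y$ by its saturation $\pi^{-1}(\pi(U))$, which openness of $\pi$ keeps open and which then automatically sees $x$ as well. The other implications are essentially formal rearrangements, but care is needed in $(1) \Leftrightarrow (2)$ and $(2) \Rightarrow (4)$ to use that $\overline{\{j\}} = D_j$ in $I_\pi$ and that $I_\pi = I_\leq$ under the Alexandrov hypothesis.
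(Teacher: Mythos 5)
Your proposal is correct and follows essentially the same route as the paper: the implications $(2)\Rightarrow(3)\Rightarrow(4)$ are argued identically (via $D_j=\overline{\{j\}}$ and upward-closedness of $\pi(U)$ in $I_\leq=I_\pi$), and your extra reverse implications are harmless. The only cosmetic difference is in $(4)\Rightarrow(1)$, where the paper invokes the standard fact that an open continuous map satisfies $\pi^{-1}\bigl(\overline{B}\bigr)=\overline{\pi^{-1}(B)}$ and applies it to $B=\{j\}$, whereas you unpack the same idea pointwise via the saturated open neighborhood $\pi^{-1}(\pi(U))$; both are valid.
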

\begin{proof}
(1) $\implies$ (2):  When the frontier condition holds the closure of a stratum $X_j$  is a union of strata, hence is the minimal closed union of strata containing $X_j$. Therefore $\overline{X_j}=\pi^{-1}(D_j)$, since $D_j$ is the minimal closed subset containing $j$.

(2) $\implies$ (3):  The decomposition preorder is defined by
\[
i\leq j \iff i \in D_j \iff X_i \subseteq \pi^{-1}(D_j) \iff X_i \subseteq \overline{X_j}
\]
where we use (2) at the last step.

(3) $\implies$ (4): Suppose that $U \subseteq X$ is open, and that $i\in \pi(U)$ and $i\leq j$. Then $U \cap X_i \neq \emptyset$ and $X_i \subseteq \overline{X_j}$. So $U\cap X_j \neq \emptyset$, which implies 
 $j\in \pi(U)$ too.  Thus $\pi(U)$ is open in the Alexandrov topology, and because the decomposition is assumed to be Alexandrov this is equivalent to $\pi(U)$ being open in the quotient topology, i.e., open in $I_{\pi}$. Hence $\pi \colon X \to I_\pi$ is an open map as claimed.

(4) $\implies$ (1): Suppose that $\pi \colon X \to I_{\pi}$  is an open map. Hence $\pi \colon X \to I_{\pi}$ is both continuous and open. Recall that\footnote{The following are well-known: (1) $f: X \to Y$ is continuous 
if and only if $\overline {f^{-1}(B)}\subset f^{-1}(\overline B)$ for any $B \subset Y$ and (2) $f: X \to Y$ is open
if and only if $f^{-1}(\overline B) \subset \overline {f^{-1}(B)}$ for any $B \subset Y$.} this implies $\pi^{-1}\left(\overline{J}\right)= \overline{\pi^{-1}(J)}$ for any $J\subseteq I$. In particular $\overline{X_j} = \overline{ \pi^{-1}(j) } = \pi^{-1}( \overline{\{j\}})$ and therefore
\[
X_i \cap\overline{X_j} = \pi^{-1}(i) \cap \overline{ \pi^{-1}(j) } = \pi^{-1}(i)\cap \pi^{-1}\left( \overline{\{j\}}\right) = \pi^{-1}\left( \{i\} \cap \overline{\{j\}}\right).
\]
Thus if $X_i \cap \overline{X_j}\neq \emptyset$, the above implies 
that $\{i\} \cap \overline{\{j\}} \not = \emptyset$, i.e., $i \in \overline{\{j\}}$, which implies that $\pi^{-1}(i) \subset \pi^{-1}( \overline{\{j\}}) =  \overline{\pi^{-1}(j)}$, i.e., 
$X_i \subseteq \overline{X_j}$. Hence the frontier condition holds.
\end{proof}
\begin{rem}\label{ta-ta}
See 
\cite[Lemma 2.3]{Tam}\footnote{Note that the proof of \cite[Proposition 3.2]{TT} is given only for the ``only if" part.} for a similar result about poset-stratified spaces: Dai Tamaki proves that a poset-stratified space $\pi: X \to I_\preccurlyeq$ is an open map if and only if $X_i \subset \overline{X_j} 
 \Longleftrightarrow i 
 \preccurlyeq j$. Here we note that for the proof of ``only if" part he also uses the above key fact that $\pi$ is a \emph{continuous and open} map if and only if $\pi^{-1}\left(\overline{J}\right)= \overline{\pi^{-1}(J)}$ for any $J\subseteq I$.
\end{rem}
\begin{cor}
\label{lc+front}
Let $\{X_i \mid i\in I\}$ be an Alexandrov decomposition of $X$.  The following are equivalent
\begin{enumerate}
\item each stratum $X_i$ is locally closed and the frontier condition holds;
\item $\pi\colon X\to I$ is poset-stratified (for some partial order) and $\pi \colon X \to I_\pi$ is an open map.
\end{enumerate}
Moreover, when these equivalent conditions hold, the decomposition preorder $\leq$ is a partial order and $\pi \colon X \to I_\leq$ is a poset-stratified space.
\end{cor}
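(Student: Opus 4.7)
The plan is to deduce this corollary directly from Propositions \ref{poset-strat} and \ref{FC} by translating between the two equivalent formulations of each side. Throughout I write $\pi\colon X\to I$ for the decomposition map and $\leq$ for the decomposition preorder, and I use that a subset is locally closed if and only if it is open in its closure.

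For $(1)\implies(2)$, I first apply Proposition \ref{FC}: the frontier condition together with the Alexandrov hypothesis gives both that $\pi\colon X\to I_\pi$ is an open map and that $\overline{X_j}=\pi^{-1}(D_j)$ for every $j\in I$. The remaining task is to exhibit $\pi$ as a poset-stratified space, for which I invoke criterion (3) of Proposition \ref{poset-strat}: I must show each $X_i$ is open in $\pi^{-1}(D_i)$. Since $X_i$ is locally closed it is open in $\overline{X_i}$, and by the frontier condition $\overline{X_i}=\pi^{-1}(D_i)$, so the two descriptions coincide and the condition is verified.

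For $(2)\implies(1)$, the openness of $\pi\colon X\to I_\pi$ is one of the hypotheses, so Proposition \ref{FC} immediately yields the frontier condition. That each stratum is locally closed is precisely the content of (3) in Proposition \ref{poset-strat}, which holds because $\pi$ is assumed to be poset-stratified for some partial order.

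For the ``moreover'' statement, condition (2) places us in the setting of Proposition \ref{poset-strat}, whose final assertion (gathered in the proof of $(1)\iff(2)$ there) states that the decomposition preorder $\leq$ is then a partial order and that $\pi\colon X\to I_\leq$ is a poset-stratified space.

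I expect no real obstacle: the entire argument is a careful bookkeeping of what was already proved in Propositions \ref{poset-strat} and \ref{FC}. The only point that requires a moment of care is the identification, in the implication $(1)\implies(2)$, of the topological condition ``$X_i$ open in $\overline{X_i}$'' (local closure) with the order-theoretic condition ``$X_i$ open in $\pi^{-1}(D_i)$'' used in Proposition \ref{poset-strat}; this identification is exactly what the frontier condition provides via Proposition \ref{FC}(2).
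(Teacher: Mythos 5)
Your proposal is correct and follows essentially the same route as the paper: both directions are obtained by combining Proposition \ref{FC} (openness of $\pi$ $\Leftrightarrow$ frontier condition, plus $\overline{X_i}=\pi^{-1}(D_i)$) with criterion (3) of Proposition \ref{poset-strat}, and the ``moreover'' part from the final assertion of Proposition \ref{poset-strat}. The only cosmetic difference is that you quote the standard fact ``locally closed $\iff$ open in its closure'' outright, whereas the paper verifies it in-line with a short computation.
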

\begin{proof}
(1) $\implies$ (2): Suppose that each $X_i$ is locally closed and the frontier condition holds. Then Proposition \ref{FC} implies that $\pi$ is an open map, and that $\overline{X_i} = \pi^{-1}(D_i)$. Since $X_i$ is locally closed, $X_i=U\cap D$ for an open set $U$ and a closed set $D$ in $X$. So, $X_i \subset \overline{X_i} \subset D$, since $X_i \subset D$ and $D$ is a closed set.
Hence we have
$$ X_i \subset U \cap \overline{X_i}  \subset U \cap D =X_i,$$
which implies that $X_i = U \cap \overline{X_i}$, thus $X_i$ is open in $\overline{X_i}$ 
, hence it is open in $\pi^{-1}(D_i)$. Therefore $\pi \colon X \to I$ is poset-stratified for some partial order (in fact with respect to the decomposition preorder) by Proposition \ref{poset-strat}.

(2) $\implies$ (1): Suppose $\pi\colon X\to I$ is poset-stratified and the decomposition map is open. By Proposition \ref{FC} the frontier condition holds. Moreover  $X_i$ is open in $\pi^{-1}(D_i)$ by Proposition \ref{poset-strat}, and so in particular locally closed.
\end{proof}
\begin{rem} The frontier condition plays a key role here. Recall the example in  Remark \ref{lc example without fc}  of an Alexandrov decomposition with locally closed strata (not satisfying the frontier condition),  
which is not poset-stratified. 
\end{rem}
\section{Consequences}
\label{sec:main}

We assemble the results of the previous sections to obtain three theorems. The first states that a stratified space is a poset-stratified space, in a natural way.
{\thmA*} 
\begin{proof}
Since $\{X_i \mid i\in I\}$ is a stratification, it is a locally finite decomposition, with locally closed strata satisfying the frontier condition. As it is locally finite, Corollary \ref{cor:lf is alex} implies that it is an Alexandrov decomposition. As it satisfies the frontier 
condition, Proposition \ref{FC} then shows that the decomposition preorder on the set $I$ of strata is given by 
\[
i \leq j \iff X_i \subseteq \overline{X_j}
\]
and is, in particular, therefore a partial order. Finally, as the strata are locally closed and satisfy the frontier condition, Corollary \ref{lc+front} implies that $\pi \colon X \to I_\leq$ is poset-stratified.
\end{proof}
\begin{rem}
This is `well-known folklore' and variants of it can be found in the literature. For example, see \cite[Corollary 4.8]{Yo} for a version under the extra assumption that there are finitely many strata.
\end{rem}
In the proof of Theorem \ref{thm A},  local finiteness  was used to  obtain that the decomposition is Alexandrov. Note, however, that ``locally finite" can be replaced by ``finitely many strata intersecting each compactum", due to Corollary \ref{cor:compact finite is alex}. 
Namely we can modify Theorem \ref{thm A} as follows:
\begin{cor} Suppose that a decomposition $\{X_i | i \in I \}$ satisfies
\begin{enumerate} 
\item only finitely many strata intersect each compactum, 
\item each stratum is locally  closed and 
\item the frontier condition. 
\end{enumerate}
Then
\begin{enumerate}
\item the set $I$ of strata is partially ordered by $i \leq j \Longleftrightarrow X_i \subset \overline{X_j}$ and
\item the decomposition map $\pi:X \to I_{\pi}=I_{\leq}$ is a poset-stratified space.
\end{enumerate}
\end{cor}
\begin{rem}
The decomposition given in Example \ref{notalex} is \emph{not locally finite}, and even fails the weaker condition that only finitely many strata intersect each compactum. As shown in Example \ref{notalex}, the decomposition map $\pi=\op{id}_X: X \to X_{\leq}$ is not a poset-stratified space with respect to the partial order $\leq$. 
\end{rem}
In contrast, not every decomposition arising from a poset-stratified space defines a stratification. 
The next result gives sufficient conditions.
{\thmB*} 
\begin{proof} Observe first that we may without loss of generality assume that $\pi$ is surjective, and hence that the indexing set of the associated decomposition of $X$ agrees with $I$. To see this, observe that the Alexandrov topology functor $\asp \colon \proset \to \tsp$ maps sub-prosets (equipped with the restricted pre-order) into inclusions of subspaces. Hence, it follows that the induced surjection $X \to \op{Im} \pi$ is continuous with respect to the Alexandrov topology on $\op{Im} \pi$, induced by restricting the preorder $\preccurlyeq$ to $\op{Im} \pi$. Furthermore, as $\op{Im} \pi \to I_{\preccurlyeq}$ is continuous with respect to this topology, it follows that $X \to \op{Im} \pi$ is again open. Using this, let us now assume that the indexing set of the  decomposition associated to $\pi$ agrees with $I$.
Since the decomposition space $I_\pi$ has the quotient topology and $\pi \colon X \to I_\preccurlyeq$ is continuous, there is a commutative diagram
\[
\xymatrix
{
X \ar[d]_{\pi} \ar[dr]^{\pi} & \\
I_\pi \ar[r]_{\id} & I_\preccurlyeq
}
\]
of continuous maps. In particular,  any open set in $I_\preccurlyeq$ is also open in $I_\pi$. Therefore $I_\pi$ is also a locally finite space and $\pi \colon X \to I_\pi$ is also an open map. It follows that the decomposition $\{\pi^{-1}(i) \mid i\in I\}$ is locally finite, and hence Alexandrov by Corollary \ref{cor:lf is alex}. Then, by Corollary \ref{lc+front}, each stratum $\pi^{-1}(i)$ is locally closed  
and the strata satisfy the frontier condition. Thus the decomposition is a stratification, as claimed.
\end{proof}
\begin{rem}
\label{rem:thm B}
When a surjective poset-stratified space $\pi \colon X \to I_\preccurlyeq$ is stratified with respect to {\em some} partial order $\preccurlyeq$ on the set $I$ of strata, Theorem \ref{thm B}  implies the decomposition $\{\pi^{-1}(i) \mid i\in I  
\}$ is a stratification, and then Theorem \ref{thm A} implies that it is poset-stratified {\em with respect to its decomposition preorder}. Moreover, the decomposition preorder is initial amongst those partial orders on $I$ for which it is poset-stratified by Remark \ref{rem:partial orders on I}.
\end{rem}
\begin{rem}
\label{rem:thm B 2} The above proof of Theorem \ref{thm B}   uses the fact that the decomposition is Alexandrov and Corollary \ref{lc+front}, namely, Theorem \ref{thm B}  follows \emph{as a consequence of our previous analysis of Alexandrov decompositions}. There is a direct proof as follows: 
\begin{enumerate}
\item the decomposition is {\em locally finite} because the Alexandrov space $I_\preccurlyeq$ is locally finite.
\item the {\em strata are locally closed} by Lemma \ref{poset-lc}, i.e.\ because each singleton of a poset is locally closed in its Alexandrov space.
\item the {\em frontier condition} follows from $\pi \colon X \to I_\preccurlyeq$ being a continuous and open map, as shown in $(4) \Longrightarrow (1)$ of the proof of Proposition \ref{FC}. See also  \cite[Proposition 3.4]{TT} for a minor variant of this proof.
\end{enumerate}
\end{rem}
The conditions of Theorem \ref{thm B}  are {\em sufficient} for the poset-stratified space to be stratified, but the following example shows that they are not {\em necessary}. 
\begin{ex}
Suppose that $\{ X_i \mid i\in I\}$ is a stratification of $X$. Then $\pi \colon X\to I_\leq$ is poset-stratified by Theorem \ref{thm A} where $i\leq j \iff X_i \subseteq \overline{X_j}$ is the decomposition preorder. 

Let $\preccurlyeq$ be any (strict) refinement of $\leq$, i.e. any different partial order such that $i\leq j \implies i\preccurlyeq j$. Strict refinements exist unless $\leq$ is a total order. Then the identity $\id \colon I_\leq \to I_\preccurlyeq$ is continuous, and therefore $\pi \colon X \to I_\preccurlyeq$ is also poset-stratified. This construction illustrates how poset-stratified spaces over `exotic' partial orders, different from the decomposition preorder, arise. Note that, by construction the decomposition preorder is initial amongst its refinements, hence $I_\leq$ is initial amongst their Alexandrov spaces as claimed in Remark \ref{rem:thm B}.

The Alexandrov space $I_\preccurlyeq$ of a strict refinement of $\leq$ {\em need not be locally finite} and the map $\pi\colon X\to I_\preccurlyeq$ is {\em never open}. (To see the latter, note that there is some $U\subset I$ which is open in $I_\leq$ but not open in $I_\preccurlyeq$. Then $\pi^{-1}(U)$ is open in $X$, but its image $\pi(\pi^{-1}(U)) = U$ is not open in $I_\preccurlyeq$.) Of course, the associated decomposition is unchanged, and so is still a stratification. This shows that the conditions of Theorem \ref{thm B}  are not necessary.

To give a concrete example, let $X=\bN$ with the pointwise decomposition $\{ \{n\} \mid n\in \bN\}$. The decomposition preorder $\leq$ on $\bN$ is trivial, i.e.\ $m\leq n \iff m=n$. Thus any other partial order $\preccurlyeq$ is a refinement, and so yields a poset-stratification $\id \colon \bN \to \bN_\preccurlyeq$ which is not an open map. If, for instance, we choose $\preccurlyeq$ to be the standard order, then $\bN_\preccurlyeq$ is not a locally finite space either.

A more traditional example is to decompose the positive quadrant $X= \bR_{\geq 0}\times \bR_{\geq 0}$ into four strata $X_0=\{(0,0)\}$, $X_1=\bR_{\geq 0}\times \{0\}$, $X_2=\{0\}\times \bR_{\geq 0}$ and $X_3 = \bR_{>0}\times \bR_{>0}$. The decomposition preorder on $I=\{0,1,2,3\}$ is the partial order $0 \leq 1,2 \leq  3$. If we refine this to the standard order $0 \preccurlyeq1 \preccurlyeq 2\preccurlyeq 3$, then $\pi \colon X \to I_\preccurlyeq$ is poset-stratified, but not an open map, because $X_1\cup X_3$ is open, but $\pi(X_1\cup X_3) = \{1,3\}$ is not. 
\end{ex}

Our final result is that the {\em sufficient} conditions of Theorem \ref{thm B}  under which a poset-stratified space is stratified are also {\em necessary} when the partial order is the associated decomposition preorder, i.e.\ the specialization preorder of the quotient topology on the set of strata.
{\thmC*}
\begin{proof}
By assumption the decomposition $\{ \pi^{-1}(i) \mid i\in I\}$ is a poset stratification, hence also an Alexandrov decomposition. Suppose it is a stratification. Then each stratum is locally closed and the frontier condition is satisfied,  so, by Corollary \ref{lc+front}, the map $\pi \colon X \to I_\leq$ is open. Moreover, the decomposition is locally finite,  which implies  that the Alexandrov space $I_\leq$ is also locally finite.

The other direction is a special case of Theorem \ref{thm B}.
\end{proof}
We end with some further examples of poset-stratified spaces which are not stratified, either because the decomposition space is not locally finite or the decomposition map is not open.
\begin{ex}
The Alexandrov space $P_\leq$ of a poset is trivially poset-stratified over itself via the identity $\id_P \colon P_\leq \to P_\leq$. The associated decomposition is the pointwise one $\{ \{p\} \mid p\in P\}$ and the decomposition preorder is the given partial order $\leq$ on $P$. The identity is evidently an open map, so by Theorem \ref{thm C}  the pointwise decomposition of a poset is a stratification if, and only if, $P_\leq$ is locally finite. This is not always the case, for example it fails for the standard order on $\bN$.  

(There is an unfortunate clash of terminology here. A poset $P$ is said to be locally finite if each bounded  interval $[p,q] = \{ r\in P \mid p\leq r\leq q\}$ is finite. This is {\em not} the same as its Alexandrov space $P_\leq$ being a locally finite space, which is equivalent instead to each bounded below interval $[p,\infty)=\{ q\in P \mid p\leq q\}$ being finite. For example, $\bN$ with the standard order is a locally finite poset, but its Alexandrov space $\bN_\leq$ is not a locally finite space.)
\end{ex}
\begin{ex}
Consider the locally finite decomposition $\bR=X_0 \cup X_1$ into locally closed strata $X_0=(-\infty,0]$ and $X_1=(0,\infty)$. The decomposition preorder is the partial order $0 \leq 1$, its Alexandrov space is locally finite (even finite), and $\pi \colon \bR \to \{0,1\}$ is a poset-stratified space. However, the decomposition is not a stratification because $\pi$ is not an open map, equivalently the frontier condition fails.
\end{ex}

\appendix

\section{Semicontinuous decompositions}
\label{sec:semicontinuity}

Theorem B shows that poset-stratified spaces for which  $\pi:X \to I_\preccurlyeq$ is an open map, which implies that the decomposition map $\pi:X \to I_{\pi}$ is also open, have good properties.  It is natural to ask what follows from the decomposition map being \emph{closed}. In this appendix, we point out that these properties have been studied by geometric topologists since the 1920s, in particular by Robert Lee Moore who introduced  notions of semicontinuity and continuity for decompositions (\cite{Moore}, cf. \cite{Moore2}).

 In this appendix we consider a decomposition $\mathcal D$ of a topological space $X$ to be an equivalence relation, and therefore denote the decomposition space by $X/\mathcal D$. The strata $X_i$ are the equivalence classes, that is the points of $X/\mathcal D$. In other words, if we let the decomposition $\mathcal D =\{X_i \, | \, i \in I \}$, then $X/\mathcal D =I$.
\begin{defn}
A decomposition $\mathcal D$ of a space $X$ is
\begin{enumerate}
\item \emph{upper semicontinuous} if $\displaystyle \bigcup_{X_i \cap U \neq \emptyset}  X_i $ is open for any open set $U$ of $X$;
\item \emph{lower semicontinuous} if $\displaystyle \bigcup_{X_i \cap F \neq \emptyset}  X_i$ is closed for any closed set $F$ of $X$;
\item \emph{continuous} if it is both upper and lower semicontinuous.
\end{enumerate} 
\end{defn}
These notions can be phrased in terms of the quotient, or  decomposition, map $X \to X/\mathcal D$ as follows. The decomposition $\mathcal D$ is 
\begin{enumerate}
\item upper semicontinuous if, and only if,  $X \to X/\mathcal D$ is a closed map;
\item lower  semicontinuous if, and only if,  $X \to X/\mathcal D$ is an open map;
\item continuous if, and only if, $X \to X/\mathcal D$ is an open and closed map.
\end{enumerate}

The theory of decomposing a (metric) space into continua (compact connected spaces) was developed by R. L. Moore in the 1920s and later by R.H. Bing in the 1950s (e.g., see \cite{Dav}).  Moore's famous theorem \cite{Moore} is that if $\mathcal D$ is an upper semicontinuous decomposition of the 2-dimensional Euclidean space $\mathbb R^2$ into continua, none of which separates $\mathbb R^2$, then the decomposition space $\mathbb R^2/\mathcal D$ is homeomorphic to the Euclidean space  $\mathbb R^2$.  In contrast, R. H. Bing \cite{Bing} proved that the analogue in $3$ dimensions is false: there exists an upper semicontinuous decomposition $\mathcal D$ of the 3-dimensional Euclidean space $\mathbb R^3$ into continua, none of which separates $\mathbb R^3$, such that the decomposition space $\mathbb R^3/\mathcal D$ is neither homeomorphic to the Euclidean space  $\mathbb R^3$ nor even a manifold, but nevertheless $(\mathbb R^3/\mathcal D) \times \mathbb R^1 \cong \mathbb R^4$, see \cite{Bing2}. This decomposition space $\mathbb R^3/\mathcal D$ is Bing's famous \emph{dogbone space}. 

A similar `wild topology' is used in Michael Freedman's proof of the $4$-dimensional Poincar\'e Conjecture, see for example \cite{Fr1, Fr2}, where again upper semicontinuous decompositions play a key role, namely, \emph{Casson's `kinky' handles}. In contrast, the notion of \emph{lower} semicontinous decomposition does not seem to have been so much studied\footnote{E.g., in \cite[p.10, before Proposition 3]{Dav} Robert Jay Davermann remarks ``Lower semicontinuous decompositions rarely come up in geometric topology, except in conjunction with continuous ones, which do play a role. Neither term will reappear here." Also, in \cite[p.125, \S 2 the first paragraph]{BingPAMS} R. H. Bing remarks ``While upper semicontinuous decompositions have been widely studied, the notion of lower semicontinuous decompositions has not been used nearly so widely."}. However, as shown in the present paper, the notion of \emph{lower} semicontinous decomposition (together with local-finiteness) seems to play a key role in stratification theory, singularity theory, and in algebraic and differential geometry.


\end{document}